\numberwithin{equation}{section}
\newtheorem{thm}{Theorem}[section]
\newtheorem{cor}[thm]{Corollary}
\newtheorem{lem}[thm]{Lemma}
\newtheorem{rem}[thm]{Remark}
\newcommand{\norm}[1]{\left\Vert#1\right\Vert}
\newcommand{\abs}[1]{\left\vert#1\right\vert}
\newcommand{\Real}{\mathbb R}
\newcommand{\pfrac}[2]{\frac{\partial #1}{\partial #2}}
\title{{ On stability of the hyperbolic space form under the normalized Ricci flow
}}
\author{Haozhao Li$^{*}$ and  Hao Yin$^\dagger$}
\begin{document}
\maketitle \bigskip

 \noindent {\bf Abstract} \ This paper studies
the normalized Ricci flow from a slight perturbation of the
hyperbolic metric on $\mathbb H^n$. It's proved that if the
perturbation is small and decays sufficiently fast at the infinity,
then the flow will converge exponentially fast to the hyperbolic
metric when the dimension $n>5$.

\section{Introduction}\label{S:Intro}

The Ricci flow of Hamilton evolves the metric of a Riemannian
manifold in the direction of an Einstein metric. There is a natural
question that if one starts from a small perturbation of an Einstein
metric, or without the a priori knowledge of the existence of an
Einstein metric, from a sufficiently (Ricci) pinched metric, can we
show the flow converges to (the) Einstein metric? The problem is
addressed by R. Ye in \cite{Ye}. Ye proved several theorems in this
direction in the case of closed Riemannian manifolds. Central to his
proof is a concept of \textit{stability}. On one hand, if the
solution to the Ricci flow remains stable, the $L^2$-norm of
traceless Ricci tensor decays exponentially. On the other hand, if
the solution remains pinched, it is stable. Since the traceless
Ricci tensor is almost the right hand side of the Ricci flow
equation, Ye was able to combine the above two observations to
conclude if the initial metric is sufficiently pinched then the
solution will remain so for any later time and converge to some
Einstein metric.

In this paper, we try to study a similar problem for complete noncompact manifolds. More precisely, we are concerned with the normalized Ricci flow
\begin{equation}
\pfrac{g_{ij}}{t}=-2(R_{ij}+(n-1)g_{ij}).
\label{eqn:NRF}
\end{equation}
Let $\mathbb H^n$ be the hyperbolic space with constant sectional curvature $-1$.
 Denote the hyperbolic metric by $g_{\mathbb H}$. A metric $g$ on $\mathbb H^n$ is said to be
  $\varepsilon$-hyperbolic for some positive $\varepsilon>0$ if
\begin{equation}
  (1-\varepsilon)g_{\mathbb H} \leq g\leq (1+\varepsilon)g_{\mathbb H}
  \label{eqn:comparable}
\end{equation}
and
\begin{equation}
  \abs{K(x,\sigma)+1}\leq \varepsilon
  \label{eqn:almosthyper}
\end{equation}
where $K(x,\sigma)$ is the sectional curvature of tangent plane
$\sigma$ at $x\in \mathbb H^n$. By a perturbation, we mean more than
just being $\varepsilon$-hyperbolic. Since we are working on a
complete noncompact manifold, it is natural to assume some
asymptotic condition at the infinity. For some $\delta>0$, the
metric $g$ is $\varepsilon$-hyperbolic of order $\delta$ if, in
addition to equation (\ref{eqn:comparable}),
\begin{equation}
  \abs{(K(x,\sigma)+1)e^{\delta d(x,x_0)}}\leq \varepsilon,
  \label{eqn:asymphyper}
\end{equation}
where $d(x, x_0)$ is the distance from $x$ to some
 fixed point $x_0$ with respect to the  metric $g.$

Our first result is
\begin{thm}
  \label{thm:first}
  For each $n\geq 3$ and $\delta>0$, there exists some $\varepsilon>0$ depending
  only on $\delta$ and $n$ such that the normalized Ricci flow starting
   from any $\varepsilon$-hyperbolic metric $g$ of order $\delta$ on
   $\mathbb H^n$ exists for all time and converges exponentially fast to some Einstein metric.
\end{thm}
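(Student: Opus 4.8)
\medskip
\noindent\emph{Sketch of the argument.}
The plan is to adapt R.~Ye's stability framework to the noncompact setting, with the compactness of the underlying manifold replaced by the spectral gap $\lambda_1(\mathbb H^n)=(n-1)^2/4$ of the hyperbolic Laplacian, and with the behaviour at spatial infinity governed by the order-$\delta$ hypothesis, which renders the perturbation relatively compact with respect to the hyperbolic model. Write $E_{ij}=R_{ij}+(n-1)g_{ij}$, so the flow reads $\pfrac{g_{ij}}{t}=-2E_{ij}$ and $g_{\mathbb H}$ is the stationary solution $E\equiv 0$. There are four steps: (i) short-time existence of the flow with bounded geometry; (ii) a maximum-principle argument showing that being $\varepsilon$-hyperbolic of order $\delta$ is preserved in time, possibly with a larger $\varepsilon$ but the same $\delta$; (iii) a weighted energy estimate showing that, so long as $g(t)$ stays $\varepsilon$-hyperbolic of order $\delta$, a suitable weighted norm of $E$ decays exponentially; and (iv) conversion of this decay into $C^\infty$ convergence of $g(t)$ to an Einstein limit. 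An open--closed argument on the maximal interval where (ii) holds then upgrades short-time to all-time existence --- a finite-time singularity is excluded by the uniform curvature bound supplied by (ii) --- and this, with (iv), is the theorem.

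For (i): an $\varepsilon$-hyperbolic metric satisfies $\abs{K}\le 1+\varepsilon$ and is uniformly bilipschitz to $g_{\mathbb H}$, hence has bounded curvature and positive injectivity radius, so W.-X.~Shi's theorem yields a solution on a maximal time interval, with $\abs{\nabla^k\mathrm{Rm}}$ bounded on compact time subintervals by Shi's local estimates. For (ii): the curvature deviation (schematically $u=K+1$) obeys a reaction--diffusion inequality of the form $\partial_t u\le \Delta_{g(t)}u-c_0\,u+(\text{terms quadratic in the deviation})$, the linear term $-c_0\,u$ being a stabilizing gap contribution visible because the background is nearly hyperbolic; testing against the weight $e^{\delta d(\cdot,x_0;g(t))}$ and invoking the Omori--Yau maximum principle (applicable since $\mathbb H^n$ has bounded geometry and the order-$\delta$ hypothesis makes the weighted quantities finite) keeps $\norm{u\,e^{\delta d}}_{L^\infty}$ under control, while $(1-\varepsilon)g_{\mathbb H}\le g(t)\le(1+\varepsilon)g_{\mathbb H}$ survives because $\abs{\pfrac{g}{t}}=2\abs{E}$ is integrable in time once the exponential decay from (iii) is in force. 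One must check that the moving distance $d(\cdot,x_0;g(t))$, hence the weight, does not deteriorate faster than the error decays, which is where $\lambda_1=(n-1)^2/4$ has to dominate the $\delta$-dependent rates.

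Step (iii) is the core. Following Ye, call the solution \emph{stable} if the linearization of $E$ --- a Lichnerowicz-type operator on symmetric $2$-tensors --- has a spectral gap; on the model $\mathbb H^n$ this holds, with the gap inherited from $\lambda_1$, and since an order-$\delta$ perturbation is relatively compact with respect to the model operator, $\varepsilon$-hyperbolicity implies stability for $\varepsilon$ small (the essential spectrum is unchanged and the finitely many eigenvalues below it move continuously). Fix a weight $e^{2\beta d}$ with $\beta$ chosen so that $\norm{E}^2:=\int_{\mathbb H^n}\abs{E}^2e^{2\beta d}\,dV_{g(t)}$ is finite under the order-$\delta$ hypothesis \emph{and} the $\beta$-conjugated operator retains a gap (such $\beta$ exist because $\delta>0$); a Bochner / integration-by-parts computation using stability then yields $\frac{d}{dt}\norm{E}^2\le-c\norm{E}^2$, so $\norm{E(t)}^2\le e^{-ct}\norm{E(0)}^2$. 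Interpolating this decay against the uniform higher-derivative bounds from Shi's estimates produces pointwise exponential decay of $\abs{E}$ and its derivatives; hence $\int_0^\infty\abs{\pfrac{g}{t}}\,dt<\infty$ in every $C^k_{\mathrm{loc}}$ norm, $g(t)\To g_\infty$ smoothly, and $E(g_\infty)\equiv 0$, i.e.\ $g_\infty$ is Einstein. (That $g_\infty=g_{\mathbb H}$ requires the extra decay available when $n>5$ and is the content of the sharper theorem.)

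The main obstacle is step (ii): carrying out the maximum principle and the weighted energy estimate rigorously on the complete noncompact $\mathbb H^n$, handling the \emph{moving} weight $e^{\delta d(\cdot,x_0;g(t))}$, and at the same time pinning down admissible exponents --- finiteness of $\norm{E}$ forces one range of weights, the spectral gap another, and preservation of the pinching a third --- in a mutually consistent way that still leaves room to close the loop pinching $\Rightarrow$ stability $\Rightarrow$ exponential decay $\Rightarrow$ pinching preserved. This quantitative balancing is precisely what makes $\varepsilon$ depend on both $\delta$ and $n$.
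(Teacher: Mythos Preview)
Your overall architecture---short-time existence via Shi, a stability/spectral-gap mechanism driving exponential decay of $E_{ij}=R_{ij}+(n-1)g_{ij}$, and an open--closed continuation---is the right one, and matches the paper's skeleton. But there is a genuine gap in step~(ii), and the paper's route around it is substantially different from what you propose.

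The gap is the claimed ``stabilizing linear term $-c_0 u$'' in the reaction--diffusion inequality. If you linearize $\partial_t h_{ij}=\Delta h_{ij}-2R_{ipjq}h_{pq}-2h_{ip}h_{pj}$ at the hyperbolic background, where $R_{ipjq}\approx g_{ij}g_{pq}-g_{iq}g_{jp}$, the zeroth-order piece is $+2h_{ij}-2(\mathrm{tr}\,h)g_{ij}$, i.e.\ \emph{positive} on the traceless part; likewise the full curvature deviation $Q_{ipjq}=R_{ipjq}-(g_{ij}g_{pq}-g_{iq}g_{jp})$ satisfies $\partial_t Q=\Delta Q+Q\ast Rm+2(n-1)Q$. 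So the pointwise maximum principle sees a \emph{destabilizing} reaction, and testing $u e^{\delta d}$ via Omori--Yau gives at best $\partial_t\|u e^{\delta d}\|_\infty\le (C-\delta(n-1)+\delta^2)\|u e^{\delta d}\|_\infty$ with $C\ge 2$; for small $\delta$ this is positive and you get growth, not control. The stabilization you want comes entirely from the spectral gap of $-\Delta$ on symmetric $2$-tensors---an $L^2$ phenomenon---which the maximum principle does not access. Consequently your bootstrap (ii)$\Rightarrow$(iii)$\Rightarrow$(ii) cannot close for small $\delta$ as written.

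The paper sidesteps this by \emph{not} trying to propagate the order-$\delta$ condition. It uses the decay hypothesis only at $t=0$; for $t>0$ the sole a~priori assumption is that $g(t)$ stays $\varepsilon_1$-hyperbolic (no order). The key estimate is a Grigor'yan-type Gaussian-weighted energy: with $\xi(y,s)=-d_0(y,x)^2/\bigl((2+C(\varepsilon))(t-s)\bigr)$ one has $\xi_s+\tfrac12|\nabla\xi|^2\le 0$, and $I(s)=\int|h|^2 e^{\xi}\,dy$ satisfies $I'(s)\le -2\lambda I(s)+C(\varepsilon)I(s)$ after integrating by parts and invoking the sharp tensor Poincar\'e inequality $\int|\nabla\tilde h|^2\ge\bigl(\tfrac{(n-1)^2}{4}+2-C(\varepsilon)\bigr)\int|\tilde h|^2$ for traceless $\tilde h$ (the ``$+2$'' here exactly cancels the bad $+4|\tilde h|^2$ reaction term). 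Moser iteration on parabolic balls then converts this into the pointwise bound
\[
|h|(x,t)\le C(\eta)\,e^{-c t}\Bigl(\int_{\mathbb H^n}|h|^2(y,0)\,e^{-(2\sqrt\lambda-C(\varepsilon))d_0(y,x)}\,dy\Bigr)^{1/2},
\]
and a convolution lemma on $\mathbb H^n$ (the condition needed is $2\sqrt\lambda-C(\varepsilon)+2\delta>n-1$, which holds for \emph{every} $\delta>0$ once $\varepsilon$ is small) turns the right-hand side into a uniform constant times $\varepsilon$. The higher-derivative control of $h$ (needed to bound $\partial_t R_{ijkl}$) comes from local parabolic Schauder estimates in harmonic coordinates, not from tracking a weighted $L^\infty$ norm of $K+1$.

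In short: your weighted-$L^2$-plus-maximum-principle plan is closer to Ye's compact argument, and the paper itself remarks that this is precisely what fails for small $\delta$; the replacement is a pointwise estimate via the Gaussian heat-kernel weight and Moser iteration, which decouples the spectral-gap mechanism (an integral estimate) from the order-$\delta$ hypothesis (used only once, at $t=0$, through the convolution lemma).
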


The basic idea is more or less the same as in \cite{Ye}. It is about the two inter-wining facts: first, as long as the solution remains close to the hyperbolic space, the analysis properties of $g(t)$, precisely the spectrum radius, resembles those of the hyperbolic space; second, we will show once we have a lower bound of the spectrum radius, $\abs{R_{ij}+(n-1)g_{ij}}$ decays exponentially so that if the initial norm is small, it has no chance of getting too far away from the hyperbolic metric $g_{\mathbb H}$. The difference between the method here and that in \cite{Ye} is that we use a point-wise estimate of $\abs{R_{ij}+(n-1)g_{ij}}$ instead of $L^2$ estimate of the traceless Ricci tensor, which enables us to handle the case of very small decay.

There is another advantage of a point-wise estimate of $\abs{R_{ij}+(n-1)g_{ij}}$, compared with a global $L^2$ estimate. It allows us to prove that the limiting Einstein metric is asymptotically hyperbolic of a certain degree if the initial metric is asymptotically hyperbolic. Combined with rigidity results of Shi and Tian \cite{ShiTian}, this implies our second result

\begin{thm}
  \label{thm:second} For $n>5$ and $\delta> 2$, there exists $\varepsilon>0$ depending
  only on $\delta$ and $n$ such that the normalized Ricci flow starting from
  any $\varepsilon$-hyperbolic metric of order $\delta$ on $\mathbb H^n$ converges
  exponentially fast to $g_{\mathbb H}$.
\end{thm}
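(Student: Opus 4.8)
\medskip
\noindent\textit{Proof proposal.} The strategy is to combine Theorem~\ref{thm:first} with a rigidity theorem: Theorem~\ref{thm:first} produces a limiting Einstein metric, and the point is to show that this limit is still asymptotically hyperbolic of order $\delta$, so that the rigidity result of Shi and Tian \cite{ShiTian} forces it to be $g_{\mathbb H}$. Fix $\varepsilon>0$ as in Theorem~\ref{thm:first}, possibly shrinking it later, and run the normalized Ricci flow (\ref{eqn:NRF}) from an $\varepsilon$-hyperbolic metric $g(0)$ of order $\delta$. By Theorem~\ref{thm:first} the flow $g(t)$ exists for all $t\geq0$ and converges exponentially fast to an Einstein metric $g_\infty$ with $R_{ij}(g_\infty)=-(n-1)(g_\infty)_{ij}$; since the flow stays uniformly equivalent to $g_{\mathbb H}$ (a byproduct of the proof of Theorem~\ref{thm:first}), so is $g_\infty$, which is in particular complete. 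It therefore suffices to show that $g_\infty$ is $\varepsilon'$-hyperbolic of order $\delta$ with $\varepsilon'$ as small as we wish: as $n>5$ and $\delta>2$, the Shi--Tian rigidity theorem then applies and identifies $(\mathbb H^n,g_\infty)$ isometrically with $(\mathbb H^n,g_{\mathbb H})$, and the exponential convergence $g(t)\to g_\infty$ yields the theorem.

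The heart of the matter is a \emph{weighted} version of the pointwise decay estimate that underlies Theorem~\ref{thm:first}. I would rerun the maximum-principle argument bounding $\abs{R_{ij}+(n-1)g_{ij}}$, but applied to the weighted quantity $e^{\delta d_{g(t)}(\cdot,x_0)}\abs{R_{ij}+(n-1)g_{ij}}_{g(t)}$. Because each $g(t)$ remains uniformly equivalent to $g_{\mathbb H}$, the distance functions $d_{g(t)}(\cdot,x_0)$ are mutually comparable and the weight is, up to bounded factors, unaffected by the evolution of the metric, while the order-$\delta$ hypothesis controls this weighted quantity at $t=0$ and near spatial infinity. The expected outcome is an estimate of the form
\begin{equation}
  \abs{R_{ij}+(n-1)g_{ij}}_{g(t)}(x)\ \leq\ C\varepsilon\, e^{-\lambda t}\, e^{-\delta\, d(x,x_0)},
  \qquad x\in\mathbb H^n,\ t\geq0,
  \label{eqn:weighted-decay}
\end{equation}
with $C,\lambda>0$ depending only on $n$ and $\delta$. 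On a complete noncompact manifold this requires the usual care with cutoff and barrier functions near infinity, where the right-hand side of (\ref{eqn:weighted-decay}) is in any case small by the hypothesis on $g(0)$.

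Granting (\ref{eqn:weighted-decay}), rerunning the arguments behind Theorem~\ref{thm:first} with this weight in place shows that the flow remains $C\varepsilon$-hyperbolic of order $\delta$ uniformly for $t\geq0$: the drift of the metric away from $g_{\mathbb H}$ and the drift of its curvature are both governed by the weighted right-hand side of (\ref{eqn:weighted-decay}), the curvature part via the standard parabolic and interior elliptic bootstrap (using that $g(t)$ has bounded curvature). Since moreover the convergence $g(t)\to g_\infty$ can be taken in $C^2_{\mathrm{loc}}$, the limit satisfies (\ref{eqn:comparable}) with $C\varepsilon$ and $\abs{K_{g_\infty}(x,\sigma)+1}\leq C\varepsilon\, e^{-\delta\, d_{g_\infty}(x,x_0)}$; that is, $g_\infty$ is $\varepsilon'$-hyperbolic of order $\delta$ with $\varepsilon'\to0$ as $\varepsilon\to0$. (Should this cost an arbitrarily small loss $\delta\rightsquigarrow\delta-\eta$ in the exponent, it does no harm, as only $\delta>2$ is needed.) With $n>5$ and $\delta>2$ the rigidity theorem of \cite{ShiTian} now applies to the Einstein metric $g_\infty$ and identifies it with $g_{\mathbb H}$, completing the proof.

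The step I expect to be the main obstacle is (\ref{eqn:weighted-decay}): propagating the spatial decay of order $\delta$ through the entire flow, uniformly in $t$ and without essential loss of rate. One must handle the time-dependent, merely Lipschitz distance function inside the maximum principle, justify the argument at spatial infinity, and keep the constants from degenerating as $t\to\infty$. A secondary subtlety is to match precisely the asymptotic hypotheses that the Shi--Tian rigidity theorem demands --- decay of the sectional curvature, of the full curvature tensor, of its covariant derivatives, or of $g_\infty-g_{\mathbb H}$ --- and to verify that $g_\infty$ meets them; for an Einstein metric the higher-order decay follows automatically from elliptic regularity once curvature decay of order $\delta$ is in hand, so this should come down to (\ref{eqn:weighted-decay}).
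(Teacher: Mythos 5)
Your overall scaffolding is the same as the paper's: invoke Theorem~\ref{thm:first} to get a limiting Einstein metric, show the limit is asymptotically hyperbolic of some order $\alpha>2$, and then apply the Shi--Tian rigidity theorem. But the key technical step --- your proposed weighted decay estimate (your displayed inequality, call it $(\ast)$) --- is a genuine gap, and the mechanism you sketch for it (a maximum principle applied to $e^{\delta d}\,\abs{h_{ij}}$) does not deliver it. Two concrete problems. First, the paper's own maximum-principle argument (Lemma~\ref{lem:maximum}) applied to the weighted curvature deviation only yields exponential \emph{growth} $e^{bt}$ in time, not decay; it is used solely on a short interval $[0,\tau]$ to carry the initial order-$\delta$ decay forward a little bit (Corollary~\ref{cor}). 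A pointwise maximum principle on the weighted quantity produces a positive source term of size roughly $\delta^2+(n-1)\delta$ coming from $\Delta e^{\delta d}$, which there is nothing to absorb; the exponential time decay in this paper does not come from the maximum principle at all, but from the spectral/heat-kernel machinery of Section~\ref{section:estimate}.

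Second, and this is the more serious issue, your claimed estimate $(\ast)$ is too strong: if it held with $C,\lambda>0$ depending only on $n$ and $\delta$, the theorem would follow for every $n\geq 3$, $n\neq 4$, and every $\delta>2$, with no restriction $n>5$. The paper's proof shows where the dimension restriction comes from. The time-decay and spatial-decay are produced by Lemma~\ref{lem:parabolic} together with the convolution Lemma~\ref{lem:convolution}: one can only preserve a spatial decay order $\alpha$ satisfying $2\alpha < 2\sqrt{\lambda}-C_{13}(\varepsilon)\approx n-1$, where $\lambda\approx\frac{(n-1)^2}{4}$ is the spectral gap. Combined with the requirement $\alpha>2$ from Shi--Tian, this forces $n>5$. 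In other words, the decay rate you can propagate through the flow is capped at roughly $(n-1)/2$ regardless of how large $\delta$ is, and for $n\leq 5$ that cap is below the threshold $2$ required by the rigidity theorem. Your $(\ast)$ does not see this cap, which is a strong signal that it is unobtainable by the route you describe. To repair the argument you would need to replace the weighted-maximum-principle step with the paper's actual chain: Lemma~\ref{lem:maximum} / Corollary~\ref{cor} on $[0,\tau]$ to get decay of order $\tilde\delta\in(2,\delta)$ at time $\tau$, then feed that into inequality (\ref{eqn:prefinal}) and Lemma~\ref{lem:convolution} with an $\alpha$ chosen in $(2,\tilde\delta)$ satisfying $2\alpha<\sqrt{(n-1)^2-\max\{C_1,C_2\}}-C_{13}$, and finally integrate $\pfrac{}{t}\abs{R_{ipjq}-(g_{ij}g_{pq}-g_{iq}g_{jp})}$ in time to bound $\abs{K_\infty+1}$ by $Ce^{-\alpha d_\infty(x,x_0)}$.
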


For $n\leq 5$, we can not prove the decay result to justify the above result. For $n=3$,
since every Einstein metric is of constant sectional curvature, the result is still true in a sense.
The case $n=5$ is the critical case as can be seen from the proof below.
The authors do not know whether this is essential or just a technical problem.
However, it is clear the the approach in this paper does not work for the case $n=4$. The condition $\delta>2$ is
 necessary. Due to the result of Graham and Lee in \cite{GrahamLee}, there does exist Poincar\'{e}-Einstein metric of non-constant curvature as close to the hyperbolic metric $g_{\mathbb H}$ as one needs such that $\abs{K(x,\sigma)+1}$ decays like $e^{-2d(x,x_0)}$ near the infinity.

The organization of the paper is: In Section \ref{section:basic}, we
establish
 two lemmas which are true on hyperbolic metric and which are robust enough to be still
 true for $\varepsilon$-hyperbolic metrics. In Section \ref{section:estimate}, the key estimate of this paper is proved. In the final section, we discuss the proof of Theorem \ref{thm:first} and Theorem \ref{thm:second}.

\bigskip

 {\bf Acknowledgements}: The first author  would like to
thank Professor P. Lu and J. Qing for very helpful discussions. Both
authors would like to thank Professor W. Y. Ding for warm
encouragements over the past several years.
\section{Basic facts on hyperbolic metric}
\label{section:basic}

In this section, we will show two basic results about analysis on
$\varepsilon$-hyperbolic metric. They follow easily from the
corresponding results on the hyperbolic metric. One of them is just
Lemma 5.4 in John Lee's paper \cite{Lee}. The other is Lemma 7.12 in
the same paper for the hyperbolic metric itself instead of
asymptotically hyperbolic metric.

\begin{lem}
  \label{lem:convolution}
  Let $g$ be an $\varepsilon$-hyperbolic metric on $\mathbb H^n$. For $a,b\in \Real$
  and $a+b>n-1$, $a>b$, there exists a constant $C$ depending only on $n,a,b$ such that
  for all $x,y\in \mathbb H^n$,
  \begin{equation*}
    \int_{\mathbb H^n} e^{-a d(x,z)}e^{-b d(y,z)}dV_z\leq C e^{-b d(x,y)}.
  \end{equation*}
\end{lem}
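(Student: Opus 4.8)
The plan is to first reduce the statement to the hyperbolic metric itself and then prove it there by direct integration in geodesic polar coordinates. Note first that $a+b>n-1>0$ together with $a>b$ forces $a>0$. By the comparison (\ref{eqn:comparable}) one has $\sqrt{1-\varepsilon}\,d_{\mathbb H}\le d_g\le\sqrt{1+\varepsilon}\,d_{\mathbb H}$ and $dV_g\le(1+\varepsilon)^{n/2}dV_{\mathbb H}$; inserting these bounds into the integral (using $a>0$, and $-b$ against whichever inequality has the right sign) replaces $a,b$ by exponents $a',b'$ which, for $\varepsilon$ sufficiently small, still satisfy $a'+b'>n-1$ and $a'>b'$, and which differ from $a,b$ by $O(\varepsilon)$. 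Hence it suffices to establish the inequality for $g=g_{\mathbb H}$, the constant $C$ and the admissible perturbation of the exponents then depending only on $n,a,b$. Finally, since $J(x,y):=\int_{\mathbb H^n}e^{-a d(x,z)-b d(y,z)}\,dV_z$ is invariant under the isometry group of $\mathbb H^n$, it depends only on $r:=d(x,y)$ and we may place $x,y$ conveniently.

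When $b\le 0$ the estimate is immediate: the triangle inequality gives $d(y,z)\le d(x,z)+r$, and since $-b\ge 0$ this yields $-a\,d(x,z)-b\,d(y,z)\le -(a+b)d(x,z)-br$, so
\begin{equation*}
 J(x,y)\le e^{-br}\int_{\mathbb H^n}e^{-(a+b)d(x,z)}\,dV_z=C(n,a+b)\,e^{-br},
\end{equation*}
the integral converging precisely because $a+b>n-1$. The substance is the case $b>0$. Write $z=\exp_x(\rho\,v)$ with $\rho=d(x,z)$, let $\theta\in[0,\pi]$ be the angle at $x$ between $v$ and the direction to $y$, and set $q(\rho,\theta)=d(y,z)$, so that by the hyperbolic law of cosines $\cosh q=\cosh r\cosh\rho-\sinh r\sinh\rho\cos\theta$. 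Then
\begin{equation*}
 J(x,y)=\omega_{n-2}\int_0^\infty\!\!\int_0^\pi e^{-a\rho}\,e^{-b\,q(\rho,\theta)}\,\sinh^{n-1}\!\rho\,\sin^{n-2}\!\theta\,d\theta\,d\rho,
\end{equation*}
and the task reduces to bounding the $\theta$-integral as a function of $\rho$ and $r$.

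For $\rho$ large, $q(\rho,\theta)=\rho+\log\!\big(\cosh r-\sinh r\cos\theta\big)+o(1)$, and $\cosh r-\sinh r\cos\theta=e^{-r}+\sinh r\,(1-\cos\theta)$ runs from $e^{-r}$ at $\theta=0$ ($z$ ``beyond $y$'') to $e^{r}$ at $\theta=\pi$. One decomposes the $\theta$-range into the inner cone $\{\theta\lesssim e^{-r}\}$, the dyadic annuli $\{\theta\sim 2^{k}e^{-r}\}$ for $1\le k\lesssim r$, and the bulk $\{\theta\sim 1\}$. On the bulk $q\approx\rho+r$, giving after the $\rho$-integration the main term of size $C\,e^{-br}$ (here $\int^\infty e^{(n-1-a-b)\rho}d\rho<\infty$, i.e.\ $a+b>n-1$, is used). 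On the $k$-th annulus $e^{-bq}\approx e^{br}4^{-bk}e^{-(a+b)\rho}$ while the solid angle is $\approx 2^{k(n-1)}e^{-(n-1)r}$, so after integrating in $\rho$ the contribution is of order $e^{-ar}2^{k(n-1-2b)}$; summing over $k$ gives a convergent geometric series when $b\ge(n-1)/2$, and when $b<(n-1)/2$ the sum is controlled by its last term, of order $e^{(n-1-2b-a)r}\le e^{-br}$, again exactly because $a+b>n-1$. The relation $a>b$ is used to swallow the resulting $e^{-ar}$ into $e^{-br}$ (and to absorb a logarithmic factor in the borderline case $b=(n-1)/2$). The region $\rho=O(r)$ is harmless. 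I expect the main obstacle to be precisely this last bookkeeping: organizing the cone decomposition, the remainder terms in $q(\rho,\theta)$, and the passage between ``beyond $y$'', ``beyond $x$'', and generic directions so that $a+b>n-1$ and $a>b$ are invoked only where indicated — the estimate is genuinely borderline, the convergence of both the $\rho$-integral and (for small $b$) the $k$-sum resting on the same strict inequality $a+b>n-1$. A tidier alternative is to pass to confocal prolate-spheroidal coordinates with foci $x,y$ (level sets of $d(x,z)+d(y,z)$), reducing $J$ to a one-dimensional integral in which $a+b>n-1$ governs the decay in the size of the spheroid and $a>b$ the behaviour near the focal segment; this is essentially the computation underlying Lemma~5.4 of \cite{Lee}.
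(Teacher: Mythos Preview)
The paper does not actually prove this lemma: it simply refers the reader to Lemma~5.4 of \cite{Lee} and adds the remark that the constant can be kept uniform as $\varepsilon\to 0$. Your reduction from the $\varepsilon$-hyperbolic metric to $g_{\mathbb H}$ via the comparison (\ref{eqn:comparable}) is exactly what that remark is asserting, so on this point you and the paper agree. (One small caveat: the comparison only returns the exponent $b$ on the right-hand side up to a multiplicative factor $\sqrt{(1-\varepsilon)/(1+\varepsilon)}$, so the inequality literally as stated, with $C=C(n,a,b)$ and exponent exactly $b$, is slightly optimistic. This is harmless in the applications, where there is slack in the inequalities $a+b>n-1$ and $a>b$.)

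Where you go beyond the paper is in sketching a direct proof on $\mathbb H^n$ itself, via geodesic polar coordinates about $x$ and a dyadic cone decomposition in the angle $\theta$. This is a genuinely different route from Lee's argument (which is carried out in the compactified picture using a boundary defining function). Your outline is correct and the arithmetic of the annuli is right: the $k$-th shell contributes $\sim e^{-ar}2^{k(n-1-2b)}$ after the $\rho$-integration, and the two cases $b\gtrless (n-1)/2$ behave exactly as you say, with $a>b$ absorbing the borderline logarithm. The one place that needs more than you indicate is the throwaway line ``the region $\rho=O(r)$ is harmless'': the crude bound $q\ge |r-\rho|$ gives only $e^{(n-1-a-b)r/2}$ after integration, which is not $\le e^{-br}$ unless $a\ge n-1+b$. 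In fact this region needs the \emph{same} angular decomposition---for $\theta$ bounded away from $0$ one has $q\approx \rho+r$ rather than $|\rho-r|$, and the focal cone $\theta\lesssim e^{-\min(\rho,r)}$ is handled as before. So the bookkeeping you anticipate for large $\rho$ has to be repeated here; nothing new is needed, but it is not free. The prolate-spheroidal reformulation you mention at the end avoids this duplication and is indeed the cleaner way to organize the computation.
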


For the proof, see Lemma 5.4 in \cite{Lee}. Please note that the
dimension of the hyperbolic space is $n+1$ in \cite{Lee} while we
use $n$. Moreover, we can make the constant uniform with respect to
$\varepsilon$ when $\varepsilon$ goes to zero.

It's well known that the bottom of the spectrum of $\mathbb H^n$ is
$\frac{(n-1)^2}{4}$. It can be characterized by
\begin{equation*}
  \inf \frac{\int_{\mathbb H^n} \abs{\nabla f}^2 dV}{\int_{\mathbb H^n}f^2\, dV}
\end{equation*}
where the infimum is taken for {all smooth $f$ with compact
support}. Denote by $C_1(\varepsilon)$ by some constant depending
only on $\varepsilon$ and
\begin{equation*}
  \lim_{\varepsilon\to 0} C_1(\varepsilon)=0.
\end{equation*}
The same convention applies to all constants $C_i(\varepsilon)$
throughout this paper. For an $\varepsilon$-hyperbolic metric $g$,
we have, for compactly supported function or function with fast
decay such that the integration by parts can be justified,
\begin{equation}\label{eqn:poincare}
  \int_{\mathbb H^n} \abs{\nabla f}^2 dV_g \geq \Big(\frac{(n-1)^2}{4}-C_1(\varepsilon)\Big) \int_{\mathbb H^n} f^2 dV_g.
\end{equation}

Let $\xi$ be a traceless symmetric two tensor. There is also a bottom of the spectrum of the Laplacian
\begin{equation*}
  \tilde{\triangle} \xi= \sum_i \nabla_{e_i}\nabla_{e_i} \xi,
\end{equation*}
where $e_i$ are orthonormal frames. To obtain sharp constants in our theorems,
 we need the following lemma, which provides the sharp estimate for traceless symmetric two tensors.
\begin{lem}\label{lem:twotensor}
  For any metric $\varepsilon$-hyperbolic metric $g$, there exists a positive constant $C_2(n,\varepsilon)$ such that for each $\lambda\leq \frac{(n-1)^2}{4}+2-C_2(\varepsilon)$, the following inequality is true for $\xi$ with compact support,
\begin{equation*}
  \int_{\mathbb H^n} \abs{\nabla \xi}^2 dV_g\geq \lambda \int_{\mathbb H^n} \abs{\xi}^2 dV_g.
\end{equation*}
\end{lem}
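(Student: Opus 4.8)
The plan is to reduce the estimate to the hyperbolic metric $g_{\mathbb H}$, for which the constant $\lambda=\frac{(n-1)^2}{4}+2$ is sharp — this is exactly Lemma 7.12 of \cite{Lee} specialized from asymptotically hyperbolic metrics to $g_{\mathbb H}$ itself — and then to re-run the argument for an $\varepsilon$-hyperbolic metric while keeping track of the error. On $g_{\mathbb H}=dr^2+\sinh^2 r\,g_{S^{n-1}}$ the sharp inequality follows by decomposing $\xi$ into its $SO(n-1)$-isotypic components with respect to the radial foliation and minimizing the resulting family of one-dimensional Rayleigh quotients; the infimum $\frac{(n-1)^2}{4}+2$ is approached, but not attained, by purely tangential traceless tensors decaying like $e^{-\frac{n-1}{2}r}$ at infinity. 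The same conclusion, in a form robust under $\varepsilon$-perturbation, comes from the following completion-of-squares computation.

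Write $r=d_g(\cdot,x_0)$, let $e_0=\nabla r$ and $e_1,\dots,e_{n-1}$ be an orthonormal frame tangent to the level sets of $r$, and split $|\nabla\xi|^2=|\nabla_{e_0}\xi|^2+\sum_{\gamma=1}^{n-1}|\nabla_{e_\gamma}\xi|^2$. For the radial part, expanding $\bigl|\nabla_{e_0}\xi+\tfrac{n-1}{2}\xi\bigr|^2$ and integrating $\langle\nabla_{e_0}\xi,\xi\rangle=\tfrac12\langle\nabla r,\nabla|\xi|^2\rangle$ by parts gives, for $\xi$ with compact support,
\begin{equation*}
  \int_{\mathbb H^n}|\nabla_{e_0}\xi|^2\,dV_g=\int_{\mathbb H^n}\Bigl|\nabla_{e_0}\xi+\tfrac{n-1}{2}\xi\Bigr|^2\,dV_g+\tfrac{n-1}{2}\int_{\mathbb H^n}(\triangle_g r)\,|\xi|^2\,dV_g-\tfrac{(n-1)^2}{4}\int_{\mathbb H^n}|\xi|^2\,dV_g .
\end{equation*}
Since $|K(x,\sigma)+1|\le\varepsilon$, the Hessian comparison theorem gives $\triangle_g r\ge (n-1)\sqrt{1-\varepsilon}\,\coth(\sqrt{1-\varepsilon}\,r)\ge(n-1)(1-\varepsilon)$, so the radial part is at least $\tfrac{(n-1)^2}{4}(1-2\varepsilon)\int|\xi|^2$. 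For the tangential part one uses $\nabla_{e_\gamma}e_0=(\mathrm{Hess}_g r)(e_\gamma,\cdot)^{\sharp}$: the components of $\nabla_{e_\gamma}\xi$ then contain an algebraic block obtained by contracting $H:=\mathrm{Hess}_g r$ (which satisfies $H\ge(1-\varepsilon)(g-dr\otimes dr)$) with $\xi$, and the traceless condition $\xi(e_0,e_0)+\sum_\alpha\xi(e_\alpha,e_\alpha)=0$ turns the squared norm of this block into a quadratic form in $\xi$ bounded below by $2(1-C\varepsilon)|\xi|^2$. Combining the two contributions yields the asserted inequality with $C_2(\varepsilon)$ a multiple of $\varepsilon$, and $\varepsilon=0$ recovers $g_{\mathbb H}$; note that only the curvature bound (\ref{eqn:almosthyper}), and not any control of $\nabla g$, is used.

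The real work is in the tangential part, where besides the good algebraic block just described there appear genuine derivatives of the components of $\xi$ together with cross terms coupling these to the block; unlike the radial part, the cross terms have no definite sign, so one must show that they assemble — together with the leftover square $\bigl|\nabla_{e_0}\xi+\tfrac{n-1}{2}\xi\bigr|^2$ — into a divergence plus a nonnegative term, or estimate them after an integration by parts. Here care is needed, since a naive integration by parts produces $\nabla\,\mathrm{Hess}_g r$, i.e.\ a covariant derivative of the curvature, which is not controlled by the hypotheses; on $\mathbb H^n$ this obstruction is absent because the separated-variable form makes the cross terms explicit and of the correct sign for the lowest modes, and the substance of the lemma is that a quantitative version of this persists under an $\varepsilon$-perturbation, with the resulting loss absorbed into $C_2(\varepsilon)$ by means of (\ref{eqn:comparable}) and (\ref{eqn:almosthyper}) alone.
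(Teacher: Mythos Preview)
Your proposal has a genuine gap, and you identify it yourself in the final paragraph: the tangential contribution is not actually worked out. You assert that the ``algebraic block'' coming from the contraction of $\mathrm{Hess}_g r$ with $\xi$ is bounded below by $2(1-C\varepsilon)|\xi|^2$, but give no computation; and you then concede that the cross terms between this block and the genuine tangential derivatives of the components of $\xi$ have no sign, and that disposing of them by integration by parts would produce $\nabla\,\mathrm{Hess}_g r$, which is \emph{not} controlled by the $\varepsilon$-hyperbolic hypothesis. Saying that ``the substance of the lemma is that a quantitative version of this persists'' is a statement of faith, not a proof. The radial completion-of-squares is fine (modulo the smoothness of $r$ at the base point, which you do not address), but the additive constant $+2$ --- which is the whole point of the lemma, as opposed to the scalar Poincar\'e inequality --- lives entirely in the tangential part, and you have not extracted it.

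The paper avoids exactly this obstruction by a different route. Rather than splitting $|\nabla\xi|^2$ radially and tangentially, it treats $\xi$ as a $T^*\mathbb H^n$-valued $1$-form and invokes Lee's integral inequality (Lemma 7.9 of \cite{Lee}) with the specific weight $\varphi=e^{-\frac{n-3}{2}r}$, where $r$ is the distance to a point chosen \emph{outside} the support of $\xi$. This yields $(\xi,\triangle\xi)\ge\frac{(n-3)^2}{4}(1-C_3(\varepsilon))(\xi,\xi)$ for the Hodge Laplacian $\triangle=DD^*+D^*D$, using only Hessian and Laplacian comparison for $r$ --- no derivatives of curvature enter. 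The passage from $\frac{(n-3)^2}{4}$ to $\frac{(n-1)^2}{4}+2$ is then purely algebraic, via the Weitzenb\"ock formula $\triangle\xi=\tilde\triangle\xi+R_{kj}\xi_{ik}+R_{ikjl}\xi_{kl}$: the curvature terms contribute $(n-1)+1=n$ up to $C(\varepsilon)$, and $\frac{(n-3)^2}{4}+n=\frac{(n-1)^2}{4}+2$. The extra $+2$ thus comes from the zeroth-order curvature term in Weitzenb\"ock, not from any delicate tangential integration by parts, and this is what makes the argument robust under $\varepsilon$-perturbation.
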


\begin{proof}
  The proof runs parallel to that of Lemma 7.12 in \cite{Lee}. We need
  to be more careful to make sure the estimate is true globally instead
   of asymptotically. The idea of the proof is to take a symmetric two tensor as a $E$-valued $1$-form ($E=T^*\mathbb H$). For any Riemannian manifold $(M,g)$, one may define the induced connection
  \begin{equation*}
    D:C^\infty(M,\Lambda^q E)\to C^\infty(M;\Lambda^{q+1}E)
  \end{equation*}
  and the covariant Laplace-Beltrami operator on $E$-valued forms,
  \begin{equation*}
    \triangle=DD^*+D^*D,
  \end{equation*}
  where $D^*$ is the formal adjoint of $D$.
One can then prove by direct computation,
\begin{lem}
    \label{lem:sevennine}
  (Lemma 7.9 in \cite{Lee}) For any smooth compactly supported section $\xi$ of $\Lambda^q E$, and any positive $C^2$ function $\varphi$ on $M$, the following integral formula holds:
  \begin{equation}
    (\xi,\triangle \xi)\geq \int_M \langle \xi, (-\varphi^{-1}\triangle \varphi+2H(\log \varphi))\xi \rangle \,dV.
  \end{equation}
  Here $\langle \cdot,\cdot \rangle$ is the induced inner product of tensor bundles and $(\cdot,\cdot)$ is $\int_M \langle\cdot,\cdot\rangle dV$, $H(u)\xi$ is defined by $H(u)\xi=u_{ij}e^i\wedge(e^j\vee
\xi)$ and operator $\vee$ is defined to be the adjoint of
$\wedge$.
\end{lem}

\begin{rem}
  In this paper, we use a sign convention of Laplacian of functions different from \cite{Lee}.
\end{rem}
The above lemma is true for all complete manifolds. Now, let $g$ be
some $\varepsilon$-hyperbolic metric on $\mathbb H^n$. We will show
that there exists some $C_3(\varepsilon)$ such that for any
compactly supported traceless symmetric two tensor $\xi$,
\begin{equation}\label{eqn:aa}
  (\xi,\triangle \xi)\geq \frac{(n-3)^2}{4}(1-C_3(\varepsilon))(\xi,\xi).
\end{equation}
Let us pretend that (\ref{eqn:aa}) is proved. The next ingredient of the
proof is a Weitzenb\"{o}ck formula,
\begin{equation*}
  \triangle \xi=\tilde{\triangle} \xi +R_{kj}\xi_{ik} +R_{ikjl}\xi_{kl}.
\end{equation*}
Since $g$ is $\varepsilon$-hyperbolic,
\begin{equation*}
  \abs{R_{ikjl}-(-g_{il}g_{jk}+g_{ij}g_{kl})}\leq C_4(\varepsilon).
\end{equation*}
Therefore,
\begin{eqnarray*}
  \int_{\mathbb H^n} \abs{\nabla \xi}^2 dV &=& (\xi,\tilde{\triangle} \xi) \\
  &=& (\xi,\triangle \xi)-R_{kl}\xi_{ik}\xi_{jl}-R_{ikjl}\xi_{kl}\xi_{ij} \\
  &\geq& (\frac{(n-3)^2}{4}(1-C_3(\varepsilon))+(n-1)+1 -nC_4(\varepsilon)) (\xi,\xi)\\
  &=& (\frac{(n-1)^2}{4}+2-C_2(\varepsilon)) (\xi,\xi).
\end{eqnarray*}

It remains to prove (\ref{eqn:aa}). For any fixed traceless symmetric two tensor $\xi$, let $z\in M$ be some point such that $B(z,1)$ lies outside of the support of $\xi$. Set
\begin{equation*}
\varphi(x)=e^{-\frac{n-3}{2}r},
\end{equation*}
where $r=d(x,z)$. We make this particular choice of $z$ and $\varphi$ for two purposes. One is to make sure $\varphi$ is $C^2$ on the support set of $\xi$ so that Lemma \ref{lem:sevennine} applies. The other is to simplify some technical point, which will be clear later. We should then be careful that $C_3(\varepsilon)$ does not depend on this choice.

By Hessian comparison, we have
\begin{equation*}
Hess(r)\leq \sqrt{1+\varepsilon}\frac{\cosh
\sqrt{1+\varepsilon}\,r}{\sinh \sqrt{1+\varepsilon}\;r}(g-dr\otimes
dr).
\end{equation*}
Therefore,
\begin{eqnarray*}
(\log \varphi)_{ij}&=& -\frac{n-3}{2}r_{ij}\\
&\geq& -\frac{n-3}{2}\sqrt{1+\varepsilon}\frac{\cosh
\sqrt{1+\varepsilon}\;r}{\sinh
\sqrt{1+\varepsilon}\;r}(g_{ij}-r_ir_j).
\end{eqnarray*}
\begin{eqnarray}
\nonumber
\langle 2H(\log \varphi)\xi,\xi\rangle &\geq&
-(n-3)\sqrt{1+\varepsilon}\frac{\cosh \sqrt{1+\varepsilon}\;r}{\sinh \sqrt{1+\varepsilon}\;r}[\langle \xi,\xi\rangle -\langle dr\wedge dr \vee\xi,\xi\rangle ]\\
\nonumber
&=& -(n-3)\sqrt{1+\varepsilon}\frac{\cosh\sqrt{1+\varepsilon}\;r}{\sinh \sqrt{1+\varepsilon}\;r}[\langle \xi,\xi\rangle -\langle dr\vee \xi,dr\vee\xi\rangle] \\
&\geq&
-(n-3)\sqrt{1+\varepsilon}\frac{\cosh\sqrt{1+\varepsilon}\;r}{\sinh\sqrt{1+\varepsilon}\;r}\langle
\xi,\xi\rangle
  \label{eqn:bb}
\end{eqnarray}

By Laplacian comparison, we have
\begin{equation*}
\triangle r\geq (n-1)\sqrt{1-\varepsilon}\frac{\cosh
\sqrt{1-\varepsilon}\;r}{\sinh \sqrt{1-\varepsilon}\;r}.
\end{equation*}
Then,
\begin{eqnarray}\nonumber
-\varphi^{-1}\triangle \varphi&=& -\frac{(n-3)^2}{4}+\frac{n-3}{2}\triangle r \\
&\geq&
-\frac{(n-3)^2}{4}+\frac{(n-3)(n-1)}{2}\sqrt{1-\varepsilon}\frac{\cosh\sqrt{1-\varepsilon}\;r}{\sinh\sqrt{1-\varepsilon}\;r}.
  \label{eqn:cc}
\end{eqnarray}
Combining equation (\ref{eqn:bb}) and (\ref{eqn:cc}) and Lemma \ref{lem:sevennine}, we have
\begin{eqnarray*}
(\xi,\triangle \xi)&\geq& \int_M (-\frac{(n-3)^2}{4}-(n-3)\sqrt{1+\varepsilon}\frac{\cosh\sqrt{1+\varepsilon}\;r}{\sinh\sqrt{1+\varepsilon}\;r}\\
&&+\frac{(n-3)(n-1)}{2}\sqrt{1-\varepsilon}\frac{\cosh\sqrt{1-\varepsilon}\;r}{\sinh\sqrt{1-\varepsilon}t})\langle
\xi,\xi\rangle dV
\end{eqnarray*}

\begin{lem} For any $\delta>0$, there exists some positive constant $\varepsilon_0(n, \delta)$ such that
\begin{equation*}
-\frac{(n-3)^2}{4}-(n-3)\sqrt{1+\varepsilon}\frac{\cosh\sqrt{1+\varepsilon}\;r}{\sinh
\sqrt{1+\varepsilon}\;r}
+\frac{(n-3)(n-1)}{2}\sqrt{1-\varepsilon}\frac{\cosh\sqrt{1-\varepsilon}\;r}{\sinh\sqrt{1-\varepsilon}\;r}\geq
\frac{(n-3)^2}{4}-\delta
\end{equation*}
holds for $r\in [1,\infty)$ and $\varepsilon$ smaller than
$\varepsilon_0(n, \delta)$.
\end{lem}
\begin{rem}
We have assumed that $B(z,1)$ is outside of the support of $\xi$. This explains why it suffices to prove the lemma for $r\in [1,\infty)$ only, which is a simplification.
\end{rem}
\begin{proof}
Given $\delta$, there is some big $R>0$ such that $r>R$ implies
$\frac{\cosh
\sqrt{1\pm\varepsilon}\;r}{\sinh\sqrt{1\pm\varepsilon}\;r}$ are very
close to $1$. Therefore, the inequality is true if $\varepsilon$ is
small.

For each $r\in [1,R]$, when $\varepsilon$ goes to $0$, the limit of the right hand side is
\begin{equation*}
-\frac{(n-3)^2}{4}-(n-3)\frac{\cosh r}{\sinh r}+\frac{(n-3)(n-1)}{2}\frac{\cosh r}{\sinh r}\geq \frac{(n-3)^2}{4}.
\end{equation*}
The limit is a continuous function. Hence by Dini's theorem in calculus, the convergence is uniform with respect to $r\in [1,R]$.
\end{proof}

This finishes our proof of equation (\ref{eqn:aa}) and the Lemma \ref{lem:twotensor}.

\end{proof}

\section{Pointwise estimate of $\abs{R_{ij}+(n-1)g_{ij}}$}
\label{section:estimate}

This section contains the proof of an estimate of $\abs{R_{ij}+(n-1)g_{ij}}$, which is the most important part in the proof of the main theorems. The proof of this estimate follows the method of Grigor'yan in \cite{Grigor}. In that paper, Grigor'yan proved a sharp estimate (Proposition 5.1) of heat kernel on complete manifolds with a positive spectrum radius. In fact, the authors tried to modify his proof and use it for our purpose. Fortunately, it turns out that it is much easier to use the method instead of the result. In a sense, we extend his result from estimating functions to tensors. Moreover, the Lemma \ref{lem:twotensor} comes in to provide sharp constants ($n$ and $\delta$) in the main theorems.

Let $g(t), t\in [0,T]$ be a solution to the normalized Ricci flow (\ref{eqn:NRF})
where $T>\eta$ for some positive constant $\eta$. Assume that there exists some $\varepsilon>0$ such that for each $t$, $g(t)$ is $\varepsilon$-hyperbolic. Note that in this section, we \textit{do not} assume that the derivatives of curvature tensor are bounded for $t\in (0,T]$, although we do know that for each time slice derivatives of any order are bounded. We need this fact later in this section to justify an integration by parts. For simplicity, set
\begin{equation*}
h_{ij}(t)=R_{ij}+(n-1)g_{ij}.
\end{equation*}

The main estimates in this section are the following two lemmas:

\begin{lem}\label{lem:point}For any $x\in \mathbb H^n$ and $t>\eta$, we have
\begin{equation}\label{eqn:finalpoint}
  \int_\eta^T \abs{h_{ij}}(x,t)dt
  \leq C(\eta, \varepsilon) \left( \int_{\mathbb H^n} \abs{h_{ij}}^2(y,0)\exp
   (-(2\sqrt{\lambda}-C_{13}(\varepsilon))d_0(y,x))dy \right)^{1/2},
\end{equation}
where $\lambda$ is any positive constant no bigger than
$\frac{(n-1)^2}{4}-\max \{C_1(\varepsilon),C_2(\varepsilon)\}$.
\end{lem}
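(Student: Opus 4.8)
The plan is to bound $\abs{h_{ij}}(x,t)$ pointwise by a weighted $L^{2}$-energy of $h_{ij}$ that decays exponentially in $t$, in the spirit of Grigor'yan's heat-kernel argument in \cite{Grigor} (borrowing the method, not the conclusion), with Lemma~\ref{lem:twotensor} and (\ref{eqn:poincare}) feeding in the sharp spectral constants, and then to integrate over $t\in[\eta,T]$. First I would record the evolution of $h$. A standard computation --- linearizing $R_{ij}+(n-1)g_{ij}$ around $g_{\mathbb H}$, or going through the Lichnerowicz Laplacian and using that it annihilates $g$ --- shows that under (\ref{eqn:NRF}), on an $\varepsilon$-hyperbolic metric $g(t)$, $\partial_{t}h_{ij}=\tilde{\triangle}h_{ij}+2h_{ij}-2(g^{kl}h_{kl})g_{ij}+P_{ij}$ with $\abs{P}\le C(\varepsilon)\abs{h}$. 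Splitting $h=\mathring h+\tfrac{u}{n}g$ with $u=g^{ij}h_{ij}$, this gives $\partial_{t}\mathring h=\tilde{\triangle}\mathring h+2\mathring h+E_{1}$ and $\partial_{t}u=\triangle u-2(n-1)u+E_{2}$, where the $E_{i}$ collect the curvature deviations and the quadratic terms produced by $\partial_{t}g=-2h$, so that $\abs{E_{1}}+\abs{E_{2}}\le C(\varepsilon)\abs{h}$, and where $\abs{h}\le C(\varepsilon)$ throughout.

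Next I would set up the weighted energy. Fix $x\in\mathbb H^{n}$, put $\psi(y)=-\beta\,d_{0}(x,y)$ for a $\beta>0$ to be chosen, and let $Q(t)=\int_{\mathbb H^{n}}\abs{h_{ij}}^{2}e^{\psi}\,dV_{g(t)}$. I may assume $Q(0)<\infty$, since otherwise the right-hand side of (\ref{eqn:finalpoint}) is infinite and there is nothing to prove, and a Duhamel representation together with Lemma~\ref{lem:convolution} then shows $Q(t)<\infty$ and $\int e^{\psi}\abs{\nabla h}^{2}\,dV_{g(t)}<\infty$ for $t>0$. Differentiating $Q$, inserting the evolution equations, and integrating by parts --- the boundary terms at infinity being removed along a sequence $R_{k}\to\infty$, which is legitimate because $h$ and $\nabla h$ are bounded on each time slice and the weighted energies are finite --- I would apply Lemma~\ref{lem:twotensor} to $\mathring h\,e^{\psi/2}$ and (\ref{eqn:poincare}) to $u\,e^{\psi/2}$ to bound $\int e^{\psi}\abs{\nabla\mathring h}^{2}$ and $\int e^{\psi}\abs{\nabla u}^{2}$ from below. (These inequalities are stated for compactly supported objects, so one first cuts off by $\varphi_{R_{k}}$ and lets $k\to\infty$; the cutoff error $\tfrac{C}{R_{k}^{2}}\int_{B(2R_{k})\setminus B(R_{k})}\abs{h}^{2}e^{\psi}$ tends to $0$ precisely because $Q(t)<\infty$.) Since $\abs{\nabla\psi}\equiv\beta$, the first-order terms that appear cost only $\tfrac{\beta^{2}}{2}Q$ by Young's inequality; crucially, the ``$+2\mathring h$'' exactly cancels the ``$+2$'' of Lemma~\ref{lem:twotensor}, the trace part moreover carries the favorable $-2(n-1)$, and all remaining terms are $\le C(\varepsilon)Q$. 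Using $\lambda\le\tfrac{(n-1)^{2}}{4}-\max\{C_{1}(\varepsilon),C_{2}(\varepsilon)\}$ I would obtain $\tfrac{d}{dt}Q\le-\bigl(2\lambda-\tfrac{\beta^{2}}{2}-C(\varepsilon)\bigr)Q$. Choosing $\beta=2\sqrt{\lambda}-C_{13}(\varepsilon)$, the coefficient equals $\mu:=2\sqrt{\lambda}\,C_{13}(\varepsilon)-\tfrac12 C_{13}(\varepsilon)^{2}-C(\varepsilon)$, which is positive for a suitable $C_{13}(\varepsilon)$ with $C_{13}(\varepsilon)\to0$; hence $Q(t)\le e^{-\mu t}Q(0)$.

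To finish I would pass from this weighted $L^{2}$ decay to a pointwise bound. From the equations above, $\partial_{t}\abs{h}^{2}\le\triangle\abs{h}^{2}+C(\varepsilon)\abs{h}^{2}$ with $\triangle$ the uniformly elliptic Laplacian of $g(t)$, so the parabolic mean-value inequality gives, for $t\ge\eta$, $\abs{h_{ij}}^{2}(x,t)\le C(\varepsilon)\eta^{-1}\int_{t-\eta/2}^{t}\int_{B_{g(s)}(x,1)}\abs{h_{ij}}^{2}\,dV_{g(s)}\,ds$. On the unit ball $d_{0}(x,\cdot)$ is bounded and the metrics are uniformly comparable, so $\int_{B(x,1)}\abs{h}^{2}\,dV\le C(\varepsilon)Q(s)$, whence $\abs{h_{ij}}^{2}(x,t)\le C(\varepsilon,\eta)\int_{t-\eta/2}^{t}e^{-\mu s}Q(0)\,ds\le C(\varepsilon,\eta)e^{-\mu t}Q(0)$. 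Taking square roots and integrating, $\int_{\eta}^{T}\abs{h_{ij}}(x,t)\,dt\le C(\varepsilon,\eta)Q(0)^{1/2}\int_{\eta}^{\infty}e^{-\mu t/2}\,dt\le C(\varepsilon,\eta)Q(0)^{1/2}$, which is (\ref{eqn:finalpoint}), with a constant independent of $T$.

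The step I expect to be the main obstacle is the weighted energy estimate of the second paragraph. Because $h$ need not decay at infinity on $\mathbb H^{n}$, the compactly supported spectral inequalities of Section~\ref{section:basic} must be transferred to $\mathring h\,e^{\psi/2}$ and $u\,e^{\psi/2}$ and the integration by parts on all of $\mathbb H^{n}$ must be justified; both rest on the a priori finiteness of $Q(t)$ and $\int e^{\psi}\abs{\nabla h}^{2}\,dV_{g(t)}$ --- this is exactly where the finiteness of the right-hand side of (\ref{eqn:finalpoint}), and the per-time-slice (rather than uniform-in-$t$) bounds on the derivatives of curvature, come in. The other delicate point is the exact accounting of zeroth-order terms, so that the ``$+2$'' of Lemma~\ref{lem:twotensor} is precisely absorbed by the curvature reaction term and the effective decay rate comes out as $\tfrac{(n-1)^{2}}{4}-\max\{C_{1},C_{2}\}$; the deliberate loss of $C_{13}(\varepsilon)$ in the weight exponent then makes the $t$-decay rate $\mu$ strictly positive, which is all the final integration in $t$ requires.
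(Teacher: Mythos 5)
Your argument is a genuine variant of the paper's, not the same proof. The paper works with the time-dependent Li--Yau weight $\xi(y,s)=-d_0^2(y)/\bigl((2+C_5(\varepsilon))(t-s)\bigr)$, which satisfies $\xi_s+\tfrac12\abs{\nabla\xi}^2\le 0$; the completing-the-square step then produces $-2\int\abs{\nabla(e^{\xi/2}h)}^2$ with no first-order loss, and only at the very end does the AM--GM inequality $a+b\ge 2\sqrt{ab}$ trade part of the time decay $e^{-(2\lambda-C_{10})t}$ for the spatial weight $e^{-(2\sqrt{\lambda}-C_{13})d_0}$. You instead fix the exponential spatial weight $\psi=-\beta d_0$ with $\beta=2\sqrt{\lambda}-C_{13}(\varepsilon)$ from the start, pay the first-order cost $\tfrac{\beta^2}{2}Q$ via Young's inequality, and are left with the small but positive rate $\mu=2\sqrt{\lambda}\,C_{13}-\tfrac12 C_{13}^2-C(\varepsilon)$. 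The spectral bookkeeping — the split into trace and traceless parts, the use of (\ref{eqn:poincare}) on the trace and Lemma~\ref{lem:twotensor} on the traceless part, and the exact cancellation of the ``$+2$'' reaction term against the ``$+2$'' in Lemma~\ref{lem:twotensor} — is identical to the paper's and is done correctly. The passage from the weighted $L^2$ decay to the pointwise bound via Moser/mean-value is also the same as the paper's.

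The point where your route is weaker, and which you yourself flag, is the a priori finiteness of the weighted energies. With the paper's weight, $e^\xi$ decays like a Gaussian for every $s<t$, so $I(s)=\int\abs{h}^2e^\xi\,dV$ is automatically finite (because $\abs{h}$ is bounded), and the integration by parts is unconditionally justified; the paper explicitly remarks on this. With your weight, $e^{-\beta d_0}$ has $\beta=2\sqrt{\lambda}-C_{13}<n-1$, so a uniform bound $\abs{h}\le n\varepsilon$ gives $Q(t)=\int\abs{h}^2 e^\psi\,dV=\infty$ a priori; you must first establish $Q(t)<\infty$ and $\int e^\psi\abs{\nabla h}^2<\infty$ before you can run the cutoff argument that transfers the compactly supported spectral inequalities to $e^{\psi/2}h$. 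Assuming $Q(0)<\infty$ is legitimate (otherwise the right side of (\ref{eqn:finalpoint}) is infinite), but the step ``a Duhamel representation together with Lemma~\ref{lem:convolution} then shows $Q(t)<\infty$'' is only sketched: it requires constructing and estimating a fundamental solution for the time-dependent tensor parabolic system $\partial_t h=\triangle_{g(t)}h-2\mathrm{Rm}*h-2h*h$ on the complete noncompact $\mathbb H^n$ with sharp enough off-diagonal decay, and then checking Lemma~\ref{lem:convolution}'s hypotheses $a+b>n-1$, $a>b$ for the relevant exponents. None of this is impossible, but it is substantial work that the paper's choice of weight sidesteps entirely. If you want to keep the static exponential weight, the cleanest repair is to first prove the pointwise Gaussian-weighted estimate (\ref{eqn:aaa}) exactly as the paper does, and only afterwards integrate it against $e^{-\beta d_0(\cdot,x)}$; that makes your $Q(t)<\infty$ an output rather than an input, and then your energy inequality becomes a clean alternative to the AM--GM step.
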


\begin{lem}\label{lem:parabolic}
    For any $x\in \mathbb H^n$ and $t>2\eta,$ we have
\begin{equation}\label{eqn:final}
    \int_{2\eta}^{T}\max_{B_0(x,\sqrt{\eta})\times [t-\eta,t]}\abs{h_{ij}}dt
  \leq C(\eta, \varepsilon) \left( \int_{\mathbb H^n} \abs{h_{ij}}^2(y,0)\exp
   (-(2\sqrt{\lambda}-C_{13}(\varepsilon))d_0(y,x))dy \right)^{1/2},
\end{equation}
where $\lambda$ is any positive constant no bigger than
$\frac{(n-1)^2}{4}-\max \{C_1(\varepsilon),C_2(\varepsilon)\}$.
\end{lem}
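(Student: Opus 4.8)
The plan is to obtain Lemma~\ref{lem:parabolic} from Lemma~\ref{lem:point} by superposing a local parabolic maximum estimate of Moser type on the pointwise‑in‑space, integrated‑in‑time bound already established. First I record the evolution inequality for $\abs{h_{ij}}$. Since $g_{\mathbb H}$ (that is, $h_{ij}\equiv 0$) is a stationary solution of the normalized flow (\ref{eqn:NRF}), all inhomogeneous terms cancel in the evolution of $h_{ij}$, and a standard computation gives $\partial_t h_{ij}=\triangle h_{ij}+b_{ij}$ with $b_{ij}$ linear in $h$ and with coefficients built from the curvature of $g(t)$; as $g(t)$ is $\varepsilon$-hyperbolic, $\abs{b_{ij}}\le C(n,\varepsilon)\abs{h}$, so
\begin{equation*}
  \partial_t\abs{h}^2\le\triangle\abs{h}^2-2\abs{\nabla h}^2+C(n,\varepsilon)\abs{h}^2 .
\end{equation*}
Thus $u:=\abs{h_{ij}}$ is a nonnegative (weak) subsolution of $\partial_t u\le\triangle u+C(n,\varepsilon)u$, and $u$ is uniformly bounded on $\mathbb H^n\times[0,T]$ by $\varepsilon$-hyperbolicity. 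Only this subsolution property and uniformly bounded geometry enter below, so the absence of time‑uniform bounds on the derivatives of curvature is irrelevant here.

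Next I would invoke a local parabolic maximum (mean value) estimate. Because every $g(t)$ satisfies (\ref{eqn:comparable}) together with (\ref{eqn:almosthyper}), the metrics $g(t)$ are uniformly bi‑Lipschitz to $g_{\mathbb H}$, with uniformly bounded curvature and a uniform injectivity radius lower bound, hence a uniform volume‑doubling property and $L^2$-Poincar\'e inequality on $g(0)$-balls of radius $\le 2\sqrt\eta$ — exactly what Moser iteration needs. This yields a constant $C(\eta,\varepsilon)$, not blowing up as $\varepsilon\to 0$, such that for $t\ge 2\eta$
\begin{equation*}
  \max_{B_0(x,\sqrt\eta)\times[t-\eta,t]}\abs{h_{ij}}
  \le C(\eta,\varepsilon)\int_{t-2\eta}^{t}\int_{B_0(x,2\sqrt\eta)}\abs{h_{ij}}(y,s)\,dy\,ds ,
\end{equation*}
the outer cylinder lying in $\mathbb H^n\times[0,T]$ since $t\ge 2\eta$, and $\mathrm{Vol}\,B_0(x,2\sqrt\eta)$ (uniformly pinched) absorbed into the constant. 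Integrating over $t\in[2\eta,T]$ and applying Fubini (for fixed $s$ the admissible $t$ form an interval of length $\le 2\eta$, and $s$ ranges over $[0,T]$) reduces the lemma to
\begin{equation*}
  \int_{B_0(x,2\sqrt\eta)}\Big(\int_0^T\abs{h_{ij}}(y,s)\,ds\Big)dy
  \le C(\eta,\varepsilon)\Big(\int_{\mathbb H^n}\abs{h_{ij}}^2(z,0)\exp\big(-(2\sqrt\lambda-C_{13}(\varepsilon))d_0(z,x)\big)\,dz\Big)^{1/2}.
\end{equation*}

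To prove this I split $\int_0^T=\int_\eta^T+\int_0^\eta$. For the tail, apply Lemma~\ref{lem:point} at the point $y\in B_0(x,2\sqrt\eta)$: since $d_0(z,y)\ge d_0(z,x)-2\sqrt\eta$ for such $y$, the weight $\exp(-(2\sqrt\lambda-C_{13})d_0(z,y))$ is at most $e^{2\sqrt\eta(2\sqrt\lambda-C_{13})}\exp(-(2\sqrt\lambda-C_{13})d_0(z,x))$, so Lemma~\ref{lem:point} bounds $\int_\eta^T\abs{h_{ij}}(y,s)\,ds$ by $C(\eta,\varepsilon)$ times the desired right‑hand side, uniformly in $y\in B_0(x,2\sqrt\eta)$; integrating in $y$ costs only the pinched volume of the ball. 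For the initial slab I use the maximum principle rather than any derivative bound: comparing the bounded subsolution $u=\abs{h_{ij}}$ with the heat flow of the time‑dependent metric $g(s)$ gives $\abs{h_{ij}}(\cdot,s)\le e^{C(n,\varepsilon)s}P_s\abs{h_{ij}}(\cdot,0)$, and the uniform Gaussian upper bound for these heat kernels (again from uniform bounded geometry), valid with uniform constants for $s\in[0,\eta]$, yields
\begin{equation*}
  \int_{B_0(x,2\sqrt\eta)}\abs{h_{ij}}(y,s)\,dy
  \le C(\eta,\varepsilon)\int_{\mathbb H^n}\abs{h_{ij}}(z,0)\,e^{-d_0(z,x)^2/(C\eta)}\,dz ,\qquad s\in[0,\eta].
\end{equation*}
Writing $e^{-d_0(z,x)^2/(C\eta)}=e^{-\frac12(2\sqrt\lambda-C_{13})d_0(z,x)}\cdot e^{\frac12(2\sqrt\lambda-C_{13})d_0(z,x)-d_0(z,x)^2/(C\eta)}$ and applying Cauchy--Schwarz, with the second factor square‑integrable on $\mathbb H^n$ (Gaussian decay beats the exponential weight) with a bound depending only on $n,\eta,\lambda$, controls the right‑hand side by $C(\eta,\varepsilon)$ times the desired quantity, uniformly for $s\in[0,\eta]$; integrating in $s$ over $[0,\eta]$ and adding the two contributions completes the argument.

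The Weitzenb\"ock‑type computation of $\partial_t h_{ij}$ and the Moser/Gaussian machinery are routine; the delicate point is that the Moser constant and the heat‑kernel constants must be taken \emph{uniform along the flow and as $\varepsilon\to 0$}, which is exactly where the uniform bounded geometry implied by $\varepsilon$-hyperbolicity (uniform curvature bounds, uniform injectivity radius, uniform volume comparison) is essential, and that on the slab $[0,\eta]$ one cannot integrate $\partial_t\abs{h_{ij}}$ directly because no time‑uniform bounds on derivatives of curvature are assumed — the maximum‑principle/heat‑semigroup comparison is the substitute.
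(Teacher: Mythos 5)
Your proposal is correct in outline but takes a genuinely different, and noticeably heavier, route than the paper's. The paper does \emph{not} derive Lemma \ref{lem:parabolic} from Lemma \ref{lem:point} as a black box: it proves both in the same sweep. The Moser iteration (\ref{eqn:Moser}) is applied to the subsolution $\abs{h_{ij}}^2$ and, as is standard, bounds the supremum over a smaller parabolic cylinder (not just the center value). Feeding that into the same Gaussian-weighted $I(s)$ argument yields (\ref{eqn:prefinal}), which already carries an explicit $\exp(-C_{12}(\varepsilon)t)$ decay factor. Taking the square root and integrating in $t$ then converges with no special treatment of the initial time slab. Your argument instead re-runs a separate $L^1\!\to\!L^\infty$ Moser step on $\abs{h_{ij}}$, Fubinis in time, and then must handle the initial interval $[0,\eta]$ via a maximum-principle comparison with the time-dependent heat propagator plus uniform Gaussian upper bounds for that propagator. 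That ingredient is avoidable and is the most delicate part of your route: the Gaussian upper bound at scale $\sqrt{s}$ (not $\sqrt{\eta}$), the $s^{-n/2}$ singularity, and the time-dependence of $g(s)$ all have to be reconciled, and none of this machinery is invoked in the paper. You could remove it entirely by shrinking the Moser cylinder to $[t-\eta/2,t]\times B_0(x,2\sqrt{\eta})$ so that, after Fubini over $t\in[2\eta,T]$, the time variable $s$ never leaves $[\eta,T]$ and Lemma \ref{lem:point} alone suffices. Also note the cosmetic difference that you work with $\abs{h_{ij}}$ as a subsolution (via Kato's inequality), whereas the paper works with $\abs{h_{ij}}^2$; both are fine. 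In short: the proof is sound, but the paper's observation that the Moser estimate already yields the parabolic-cylinder maximum, together with the built-in exponential decay in $t$ in (\ref{eqn:prefinal}), makes the result a one-line extension of Lemma \ref{lem:point}, whereas your reconstruction introduces a heat-kernel detour that the problem does not require.
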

\begin{rem}
    Lemma \ref{lem:parabolic} is prepared for estimating the derivatives of $h_{ij}$.
\end{rem}

\begin{proof} (For both lemmas.)
It is straight forward to compute the evolution equations for $h_{ij}$ and $\abs{h_{ij}}^2$:
\begin{equation}
\pfrac{}{t}h_{ij}=\triangle h_{ij}-2R_{ipjq}h_{pq}-2h_{ip}h_{pj}
\label{eqn:hij}
\end{equation}
\begin{equation}
\pfrac{}{t}\abs{h_{ij}}^2=\triangle \abs{h_{ij}}^2 -2\abs{h_{ij,k}}^2-4R_{ipjq}h_{ij}h_{pq}-4h_{ip}h_{pj}h_{ij}.
\label{eqn:normofh}
\end{equation}
Since $g(t)$ is $\varepsilon$-hyperbolic,
\begin{equation*}
\pfrac{}{t}\abs{h_{ij}}^2\leq \triangle \abs{h_{ij}}^2 + c \abs{h_{ij}}^2,
\end{equation*}
where $c$ is some universal constant. For each $t>\eta$ and any $x\in M$, consider a parabolic ball $B_0(x,\sqrt{\eta})\times [t-\eta,t]$. Here by $B_0$ we mean geodesic ball measured by $g(0)$. Recall that $g(t)$ and $g_{\mathbb H}$ are comparable by a constant $1+\varepsilon$. Therefore, we have uniform Sobolev inequality on $B_0(x,\sqrt{\eta})$ for each $g(s),s\in [t-\eta,t]$. The standard Moser iteration gives
\begin{equation}
\abs{h_{ij}}^2(x,t)\leq C(\eta) \int_{t-\eta/2}^t\int_{B_0(x,\sqrt{\eta}/2)}\abs{h_{ij}}^2(y,x)dyds.
\label{eqn:Moser}
\end{equation}

The following lemma is a direct consequence of $g(t)$ being
$\varepsilon$-hyperbolic.

\begin{lem}
There exists a constant $C_5(\varepsilon)$ (which vanishes if $\varepsilon$ goes to zero) such that
\begin{equation*}
\xi(y,s)=-\frac{d_0^2(y)}{(2+C_5(\varepsilon))(t-s)}
\end{equation*}
satisfies
\begin{equation*}
\xi_s+\frac{1}{2}\abs{\nabla \xi}^2\leq 0
\end{equation*}
for $s<t$. Here $d_0(y)$ is a distance function with respect to
$g(0)$ and the norm and $\nabla$ are those of $g(s)$.
\end{lem}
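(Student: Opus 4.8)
My plan is to reduce the claim to an elementary computation, invoking $\varepsilon$-hyperbolicity only to compare the $g(s)$-gradient of the frozen distance function with its $g(0)$-gradient. Write $A=2+C_5(\varepsilon)$ and let $d_0(y)=d_{g(0)}(y,x)$ denote the distance to the fixed center $x$, so that $\xi=-d_0^2/(A(t-s))$. I would first compute the two ingredients separately: differentiating in the time variable gives $\xi_s=-d_0^2/(A(t-s)^2)$, and since $\nabla\xi=-2d_0\,\nabla d_0/(A(t-s))$ one gets $\abs{\nabla\xi}^2=4d_0^2\abs{\nabla d_0}_{g(s)}^2/(A^2(t-s)^2)$, where the gradient and the norm are those of $g(s)$.

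The only non-formal step is to bound $\abs{\nabla d_0}_{g(s)}^2$ from above. Since $d_0(\cdot)=d_{g(0)}(\cdot,x)$ is $1$-Lipschitz with respect to $g(0)$, one has $\abs{\nabla d_0}_{g(0)}^2\le 1$ (with equality off the cut locus of $x$). Because $g(0)$ and $g(s)$ are both $\varepsilon$-hyperbolic, the sandwich bounds $(1-\varepsilon)g_{\mathbb H}\le g(s)\le(1+\varepsilon)g_{\mathbb H}$ together with the analogous ones for $g(0)$, after inverting and chaining through $g_{\mathbb H}^{-1}$, give $g(s)^{-1}\le\frac{1+\varepsilon}{1-\varepsilon}g(0)^{-1}$ as quadratic forms on $T^*\mathbb H^n$; hence $\abs{\nabla d_0}_{g(s)}^2\le\frac{1+\varepsilon}{1-\varepsilon}$. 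Feeding this into the two displays above,
\[
\xi_s+\frac12\abs{\nabla\xi}^2\le\frac{d_0^2}{A(t-s)^2}\Big(-1+\frac{2(1+\varepsilon)}{A(1-\varepsilon)}\Big),
\]
so the asserted inequality holds as soon as $A\ge\frac{2(1+\varepsilon)}{1-\varepsilon}$. It therefore suffices to take $C_5(\varepsilon)=\frac{4\varepsilon}{1-\varepsilon}$ (any larger choice works too), and $C_5(\varepsilon)\to 0$ as $\varepsilon\to 0$, which is the claim. This is also why the coefficient in $\xi$ is taken as close to $2$ as $\varepsilon$ permits: the value $2$ produces the sharp Gaussian weight, hence the sharp exponential rate $2\sqrt{\lambda}$, in the estimates of Lemmas \ref{lem:point} and \ref{lem:parabolic}.

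There is essentially no obstacle here beyond bookkeeping; the only point requiring a word of care is that $d_0$ is merely Lipschitz and not smooth on the cut locus of $x$, so the bound on $\abs{\nabla\xi}^2$ — and therefore the stated Hamilton--Jacobi inequality for $\xi$ — holds pointwise only away from that closed measure-zero set, or else in the distributional (barrier) sense. This is exactly the form in which $\xi$ will be used in the integrated maximum principle following Grigor'yan's method in the remainder of the section, so no regularization is needed; if a genuinely smooth weight is preferred, one may replace $d_0$ by a smooth function that is $1$-Lipschitz with respect to $g(0)$ and coincides with $d_0$ outside a small neighborhood of the cut locus, which only enlarges $C_5(\varepsilon)$ by a quantity still tending to $0$ with $\varepsilon$.
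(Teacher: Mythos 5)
Your argument is correct, and the paper itself provides no proof — it merely asserts that the lemma is a ``direct consequence'' of $g(t)$ being $\varepsilon$-hyperbolic — so your calculation is precisely the intended one. The only remark worth adding is that the lemma deliberately says ``a distance function with respect to $g(0)$'' rather than ``the distance to $x$'': in the application immediately following, $d_0(y)$ is the distance to the ball $B_0(x,\sqrt{\eta}/2)$, and your proof uses only that $d_0$ is $1$-Lipschitz for $g(0)$, which holds equally for the distance to a set; your note on the cut locus (the bound $\abs{\nabla d_0}_{g(0)}\le 1$ holding a.e., so the Hamilton--Jacobi inequality holds in the barrier/distributional sense) is the correct caveat and is exactly what is needed for the integrated estimate that follows Grigor'yan's method.
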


Now, let $d_0(y)$ be the distance to $B_0(x,\sqrt{\eta}/2)$ measured with respect to $g(0)$ and $\xi$ be as in the previous lemma. Since $\xi(y,s)\equiv 1$ for $y\in B_0(x,\sqrt{\eta}/2)$,
\begin{equation}\label{eqn:jjj}
\abs{h_{ij}}(x,t)^2\leq C\int_{t-\eta/2}^t \int_{\mathbb H^n}
\abs{h_{ij}}^2(y,s)e^{\xi} dyds.
\end{equation}
The integrability is not a problem since when $s<t$, $e^\xi$ decays very fast at the infinity and $\abs{h_{ij}}^2$ is always bounded. Set
\begin{equation*}
I(s)=\int_{\mathbb H^n}\abs{h_{ij}}^2(y,s)e^\xi dy.
\end{equation*}

Then,
\begin{eqnarray*}
\frac{d I}{ds}(s)&=& \frac{d}{ds}\int_{\mathbb H^n} \abs{h_{ij}}^2 e^{\xi}dy \\
&\leq& \int_{\mathbb H^n} 2\langle \pfrac{h_{ij}}{s},h_{ij}\rangle e^{\xi}+\abs{h_{ij}}^2e^\xi\xi_s +\abs{h_{ij}}^2e^{\xi}C_6(\varepsilon)dy \\
&=& \int_{\mathbb H^n} 2\langle \triangle h_{ij}-2R_{ipjq}h_{pq}-2h_{ip}h_{pj},h_{ij}\rangle e^{\xi} +\abs{h_{ij}}^2 e^\xi \xi_s +\abs{h_{ij}}^2 e^\xi C_6(\varepsilon) dy  \\
&\leq& \int_{\mathbb H^n} 2\langle \triangle h_{ij},h_{ij}\rangle
e^{\xi} -4R_{ipjq}h_{pq}h_{ij}e^\xi +\abs{h_{ij}}^2 e^\xi\xi_s
+\abs{h_{ij}}^2 e^\xi C_7(\varepsilon)dy.
\end{eqnarray*}
Here we use $\varepsilon$-hyperbolic in the second line, the
evolution equation of $h_{ij}$ in the third and the fact
$\abs{h_{ij}}\leq n\varepsilon$ in the last.

Consider the following divergence term, whose integration vanishes. (Of cause, this needs justification. $e^{\xi}$ decays very fast at infinity while $h_{ij}$ together with its derivatives remain bounded.)
\begin{eqnarray*}
\left( \langle \nabla_k h_{ij},h_{ij}\rangle e^{\xi} \right)_{,k} &=& \langle \triangle h_{ij},h_{ij}\rangle e^\xi +\abs{\nabla_kh_{ij}}^2 e^{\xi} +\nabla_kh_{ij}h_{ij}e^{\xi}\xi_k .
\end{eqnarray*}

\begin{eqnarray*}
\int_{\mathbb H^n} 2\triangle h_{ij}h_{ij}e^\xi +\abs{h}^2 e^{\xi}\xi_s dy&=& \int_{\mathbb H^n} -2\abs{\nabla_k h_{ij}}^2 e^\xi -2\nabla_k h_{ij}h_{ij}\xi_k e^\xi +\abs{h_{ij}}^2 e^\xi \xi_s \\
&=& \int_{\mathbb H^n} -\frac{1}{2}e^\xi\abs{2\nabla_k h_{ij}+\xi_k h_{ij}}^2 +\abs{h_{ij}}^2 e^{\xi}(\xi_s+\frac{1}{2}\abs{\nabla\xi}^2)\\
&\leq& -2\int_{\mathbb H^n} \abs{\nabla_k (e^{\xi/2}h_{ij})}^2
\end{eqnarray*}
Since $g(t)$ is $\varepsilon$-hyperbolic, we have
\begin{equation*}
\abs{R_{ipjq}-(g_{ij}g_{pq}-g_{iq}g_{jp})}\leq C_8(\varepsilon).
\end{equation*}

\begin{eqnarray*}
\frac{dI}{ds}(s)&\leq& \int_{\mathbb H^n} -2\abs{\nabla_k (e^{\xi/2}h_{ij})}^2 +4(-g_{ij}g_{pq}+g_{iq}g_{jp})h_{ij}h_{pq}e^{\xi} +C_9(\varepsilon) I(s) \\
&=&\int_{\mathbb H^n} -2\abs{\nabla_k (e^{\xi/2}h_{ij})}^2 -4
\mbox{tr}(h)^2e^{\xi} + 4\abs{h_{ij}}^2e^{\xi} +C_9(\varepsilon)
I(s)
\end{eqnarray*}
For abbreviation, we write $\mbox{tr}$ for
$\mbox{tr}(e^{\xi/2}h_{ij})$ and $\tilde{h}_{ij}$ for the traceless
part of $e^{\xi/2}h_{ij}$, that is
\begin{equation*}
e^{\xi/2}h_{ij}=\frac{\mbox{tr}}{n}g_{ij}+\tilde{h}_{ij}.
\end{equation*}

\begin{equation*}
\abs{e^{\xi/2}h_{ij}}^2=\frac{\mbox{tr}^2}{n}+\abs{\tilde{h}_{ij}}^2.
\end{equation*}

Moreover,
\begin{equation*}
\abs{\nabla_k(e^{\xi/2}h_{ij})}^2=\frac{1}{n}\abs{\nabla \mbox{tr}}^2+\abs{\nabla_k \tilde{h}_{ij}}^2.
\end{equation*}

\begin{eqnarray*}
\frac{dI}{ds}(s) &\leq& \int_{\mathbb H^n} -\frac{2}{n}\abs{\nabla \mbox{tr}}^2 -2\abs{\nabla \tilde{h}}^2 -4\mbox{tr}^2 +\frac{4}{n}\mbox{tr}^2 +4\abs{\tilde{h}}^2 +C_9(\varepsilon)I(s)\\
&\leq& \int_{\mathbb H^n} -\frac{2}{n}\abs{\nabla \mbox{tr}}^2 -2\abs{\nabla \tilde{h}}^2  +4\abs{\tilde{h}}^2 +C_9(\varepsilon)I(s)\\
&\leq& \int_{\mathbb H^n} -\frac{2\lambda}{n}\mbox{tr}^2 -2\lambda \abs{\tilde{h}}^2 +C_{10}(\varepsilon)I(s)\\
&\leq&- 2\lambda I(s) +C_{10}(\varepsilon)I(s).
\end{eqnarray*}
Here $\lambda$ is any positive constant no bigger than $\frac{(n-1)^2}{4}-\max \{C_1(\varepsilon),C_2(\varepsilon)\}$. We have used Lemma \ref{lem:twotensor} and the Poincar\'{e} inequality (\ref{eqn:poincare}).
By ODE comparison,
\begin{equation*}
I(s)\leq e^{-(2\lambda-C_{10}(\varepsilon))s}I(0).
\end{equation*}
Together with equation (\ref{eqn:jjj}), this implies
\begin{equation*}
\abs{h_{ij}}^2(x,t)\leq C(\eta) e^{-(2\lambda-C_{10}(\varepsilon))t}I(0).
\end{equation*}
Hence,
\begin{equation}\label{eqn:aaa}
  \abs{h_{ij}}^2(x,t)\leq C(\eta) \int_{\mathbb H^n} \abs{h_{ij}}^2(y,0)\exp \left( -\frac{d_0^2(y)}{(2+C_5(\varepsilon))t}-(2\lambda-C_{10}(\varepsilon))t \right) dy.
\end{equation}
This is a point-wise estimate on $h_{ij}$, from which one can know
the behavior of $h_{ij}$ for any $t>\eta$ and $x\in \mathbb H^n$. We
will be interested in an estimate of the accumulation of $h_{ij}$,
since it measures the change of $g_{ij}$.
\begin{equation*}
\abs{h_{ij}}^2(x,t)\leq
C(\eta)\exp(-C_{12}(\varepsilon)t)\int_{\mathbb
H^n}\abs{h_{ij}}^2(y,0)\exp\left(
-\frac{d_0^2(y)}{(2+C_5(\varepsilon))t}-(2\lambda-C_{11}(\varepsilon))t
\right)dy.
\end{equation*}
Due to the basic inequality $a+b\geq 2\sqrt{ab}$, there exists some
$C_{13}(\varepsilon)$ such that
\begin{equation}\label{eqn:basic}
\abs{h_{ij}}^2(x,t)\leq
C(\eta)\exp(-C_{12}(\varepsilon)t)\int_{\mathbb
H^n}\abs{h_{ij}}^2(y,0)\exp\left(
-(2\sqrt{\lambda}-C_{13}(\varepsilon))d_0(y) \right)dy.
\end{equation}
Recall that $d_0(y)$ is the distance to $B_0(x,\sqrt{\eta}/2)$. By
triangle inequality,
\begin{equation*}
d_0(y)>d_0(y,x)-\sqrt{\eta}/2.
\end{equation*}
Hence,
\begin{equation*}
\exp( -(2\sqrt{\lambda}-C_{13}(\varepsilon))d_0(y))\leq C(\eta) \exp( -(2\sqrt{\lambda}-C_{13}(\varepsilon))d_0(y,x)).
\end{equation*}
Then inequality (\ref{eqn:basic}) is still true if we replace $d_0(y)$ by $d_0(y,x)$. Taking the square root and integrating over time proves Lemma \ref{lem:point}.

By a similar argument,
\begin{equation*}
\max_{B_0(x,\sqrt{\eta})}\abs{h_{ij}}^2(\cdot,t)\leq
C(\eta)\exp(-C_{12}(\varepsilon)t)\int_{\mathbb
H^n}\abs{h_{ij}}^2(y,0)\exp\left(
-(2\sqrt{\lambda}-C_{13}(\varepsilon))d_0(y,x) \right)dy.
\end{equation*}
For each $t>2\eta$, the above inequality is true for $s\in [t-\eta,t]$. Similar argument in $t$ direction gives
\begin{equation}\label{eqn:prefinal}
\max_{B_0(x,\sqrt{\eta})\times [t-\eta,t]}\abs{h_{ij}}^2\leq
C(\eta)\exp(-C_{12}(\varepsilon)t)\int_{\mathbb
H^n}\abs{h_{ij}}^2(y,0)\exp\left(
-(2\sqrt{\lambda}-C_{13}(\varepsilon))d_0(y,x) \right)dy.
\end{equation}
Taking the square root and integrating over time, we have
\begin{equation*}
  \int_\eta^{T}\max_{B_0(x,\sqrt{\eta})\times [t-\eta,t]}\abs{h_{ij}}dt\leq
   C(\eta, \varepsilon) \left( \int_{\mathbb H^n} \abs{h_{ij}}^2(y,0)\exp
    (-(2\sqrt{\lambda}-C_{13}(\varepsilon))d_0(y,x))dy \right)^{1/2}.
\end{equation*}
This finishes the proof of Lemma \ref{lem:parabolic}.
\end{proof}
\section{Proof of the theorems}
\label{section:proof}

We can now show the proof of the main theorems. Let's look at the proof of Theorem \ref{thm:first}.
 The theorem assumes that the initial metric is some $\varepsilon$-hyperbolic metric of order $\delta$ for some $\delta>0$. Given this $\delta>0$, pick an $\varepsilon_1>0$ such that
\begin{equation*}
2\sqrt{\frac{(n-1)^2}{4}-\max\{C_1(\varepsilon_1),C_2(\varepsilon_1)}\}-C_{13}(\varepsilon_1)+2\delta>n-1.
\end{equation*}
For the meaning of $C_i$'s, see the previous section. This is possible because
\begin{equation*}
\lim_{\varepsilon\to 0}C_i(\varepsilon)=0.
\end{equation*}

Let $g(0)$ be any $\frac{1}{10}\varepsilon_1$-hyperbolic initial
metric. Due to a result of Shi\cite{ShiWX},
there exists a local solution to the normalized Ricci flow
(\ref{eqn:NRF}). By continuity, there exists some $\tau>0$ such that
for each $t\in [0,\tau]$, $g(t)$ is
$\frac{1}{2}\varepsilon_1$-hyperbolic. Let $T$ be the maximum number
such that $g(t)$ remains $\varepsilon_1-$hyperbolic for $t\in
[0,T]$. By another result of Shi in the same paper, there exists constant $C(k,\tau)$, such that the $k-$th derivatives
of curvature tensor of $g(t)$ are bounded by $C(k,\tau)$ uniformly
for $t\in [\frac{\tau}{2},T]$.

We can now apply the result of Section \ref{section:estimate} to the
flow $g(t)$ with $t\in [0,T]$ and $\eta=\tau/2$. Since $g(0)$ is
$\varepsilon$-hyperbolic of order $\delta$, we have
\begin{equation*}
\abs{h_{ij}(y,0)}\leq C\varepsilon e^{-\delta d_0(y,x_0)}.
\end{equation*}
By Lemma \ref{lem:point},
\begin{equation}
  \int_{\frac{\tau}{2}}^{T}\abs{h_{ij}(x,t)}dt\leq C(\varepsilon_1) \varepsilon \left( \int_{\mathbb H^n} \exp(-2\delta d_0(y,x_0))\exp (-(2\sqrt{\lambda}-C_{13}(\varepsilon_1))d_0(y,x))dy \right)^{1/2}.
\end{equation}
By our choice of $\varepsilon_1$, set $\lambda=\frac{(n-1)^2}{4}-\max\{C_1(\varepsilon_1),C_2(\varepsilon_1)\}$, then integral in the right hand side in the above inequality is finite. Hence,
\begin{equation}\label{eqn:11}
\int_{\frac{\tau}{2}}^T \abs{h_{ij}}(x,t)dt \leq C\varepsilon.
\end{equation}
Since $g(\frac{\tau}{2})$ is $\frac{1}{2}\varepsilon_1$-hyperbolic,
\begin{equation*}
(1-\frac{1}{2}\varepsilon_1)g_{\mathbb H^n}\leq
g(\frac{\tau}{2})\leq (1+\frac{1}{2}\varepsilon_1)g_{\mathbb H^n}.
\end{equation*}
The normalized Ricci flow equation
\begin{equation*}
\pfrac{g_{ij}}{t}=-2h_{ij}
\end{equation*}
together with equation (\ref{eqn:11}) implies one can choose $\varepsilon$ small so that
\begin{equation*}
(1-\frac{3}{4}\varepsilon_1)g_{\mathbb H^n}\leq g(t)\leq
(1+\frac{3}{4}\varepsilon_1)g_{\mathbb H^n}
\end{equation*}
for each $t\in [\frac{\tau}{2},T]$.

If we can show
\begin{equation}
\abs{K_{g(t)}(x,\sigma)+1}\leq \frac{3}{4}\varepsilon_1
\label{eqn:bbb}
\end{equation}
for each $t\in [\tau,T]$, then $g(t)$ is $\frac{3}{4}\varepsilon_1-$hyperbolic. This implies $T=\infty$ by its definition. The proof of equation (\ref{eqn:bbb}) involves higher order derivative estimates of $h_{ij}$, because
\begin{equation*}
\pfrac{}{t}{R_{ijk}}^l=-g^{lp}\left\{
\begin{array}[]{c}
\nabla_i\nabla_j h_{kp} +\nabla_i\nabla_k h_{jp}-\nabla_i\nabla_p h_{jk} \\
-\nabla_j\nabla_i h_{kp}-\nabla_j\nabla_k h_{ip}+\nabla_j\nabla_p h_{ik}
\end{array}
\right\}.
\end{equation*}
Recall that $h_{ij}$ satisfies
\begin{equation*}
\pfrac{}{t}h_{ij}=\triangle h_{ij}-2R_{ipjq}h_{pq}-2h_{ip}h_{pj}.
\end{equation*}

For each $t\in (\tau,T]$ and $x\in \mathbb H^n$, consider a
parabolic ball $B_0(x,\sqrt{\tau/2})\times [t-\tau/2,t]$ where the
radius is measured by $g_0$. Due to the assumption that $g(t)$ is
$\varepsilon_1$-hyperbolic and the derivatives of curvature tensor
are uniformly bounded by $C(k,\tau)$ for $t\in [\tau/2,T]$, one can choose
a local coordinate system on $B_0(x,\sqrt{\tau/2})$, for example the
harmonic coordinates with respect to $g(t-\tau/2)$, such that $g_{ij}(x,t)$
together with its derivatives are bounded in the parabolic ball
$B_0(x,\sqrt{\tau/2})\times [t-\tau/2,t]$.

We then apply the standard parabolic estimate to equation (\ref{eqn:hij}) in this parabolic ball to show
\begin{equation*}
\abs{\pfrac{h_{ij}}{x_k}}(x,t),\abs{\frac{\partial^2 h_{ij}}{\partial x_k \partial x_l}}(x,t)\leq C \max_{B_0(x,\sqrt{\tau/2})\times [t-\tau/2,t]} \sum_{p,q}\abs{h_{pq}}.
\end{equation*}
This implies
\begin{equation}\label{eqn:high}
\abs{\nabla\nabla h_{ij}}(x,t)\leq C \max_{B_0(x,\sqrt{\tau/2})\times [t-\tau/2,t]} \sum_{p,q}\abs{h_{pq}}.
\end{equation}
The method of choosing harmonic coordinates and the parabolic estimate involved here are rather routine but lengthy. We move the detail to the appendix.

Instead of estimating $\abs{K(x,\sigma)+1}$ for any $x$ and
$\sigma$, we consider
$\abs{R_{ipjq}-(g_{ij}g_{pq}-g_{iq}g_{jp})}$.

\begin{eqnarray*}
\pfrac{}{t}\abs{R_{ipjq}-(g_{ij}g_{pq}-g_{iq}g_{jp})}(x,t)&\leq& C(\abs{\pfrac{}{t}R_{ipjq}}+\abs{\pfrac{}{t}g_{ij}}) \\
&\leq& C\max_{B_0(x,\sqrt{\tau/2})\times [t-\tau/2,t]} \sum_{p,q}\abs{h_{pq}}
\end{eqnarray*}
Due to Lemma \ref{lem:parabolic} ($\eta=\tau/2$) and our choice of
$\varepsilon_1$, if $g(0)$ is $\varepsilon$-hyperbolic of order
$\delta$,
\begin{equation*}
\int_{\tau}^T \pfrac{}{t}\abs{R_{ipjq}-(g_{ij}g_{pq}-g_{iq}g_{jp})}(x,s)ds\leq C(\varepsilon_1)\varepsilon.
\end{equation*}
Therefore, equation (\ref{eqn:bbb}) is true if we choose
$\varepsilon$ small. Hence $T=\infty$, that is the solution will
exist for all time and remain $\varepsilon_1$-hyperbolic for ever.
It is now obvious from equation (\ref{eqn:prefinal}) that the
solution will converge to some Einstein metric. This finishes the
proof of Theorem \ref{thm:first}.

\medskip

We can now turn to the proof of Theorem \ref{thm:second}. Here is a result of Shi and Tian \cite{ShiTian} on the rigidity of hyperbolic space.

\begin{thm}
  Suppose that $(X^n,g), n\geq 3$ and $n\ne 4$ is an ALH manifold of order $\alpha(\alpha>2)$, $K\leq 0$ and $Ric(g)\geq -(n-1)g$, then $(X^{n},g)$ is isometric to $(\mathbb H^n, g_{\mathbb H^n})$.
\end{thm}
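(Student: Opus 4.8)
The plan is to prove this by a volume--comparison (Bishop--Gromov) rigidity argument: use $K\le0$ to rigidify the global geometry, use $\mathrm{Ric}\ge-(n-1)g$ to obtain a one--sided comparison with $\mathbb H^n$, and use the asymptotic condition to force that comparison to be an equality, from which rigidity follows. Note at the outset that the curvature conditions alone only give $-(n-1)\le K\le0$ pointwise, so the decay hypothesis at infinity is genuinely indispensable.

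First, since $(X^n,g)$ is complete with $K\le0$, I would invoke Cartan--Hadamard: $\exp_p\colon T_pX\to X$ is a diffeomorphism for every $p$, $X\cong\Real^n$, there is no cut locus, so $r:=d(\cdot,p)$ is smooth on $X\setminus\{p\}$, geodesic polar coordinates are global, and $B_R(p)$ exhaust $X$. Next, along a unit--speed radial geodesic the Bochner/Riccati identity $(\Delta r)'=-\abs{\mathrm{Hess}\,r}^2-\mathrm{Ric}(\partial_r,\partial_r)$ together with $\abs{\mathrm{Hess}\,r}^2\ge(\Delta r)^2/(n-1)$ and $\mathrm{Ric}(\partial_r,\partial_r)\ge-(n-1)$ gives $(\Delta r)'\le-(\Delta r)^2/(n-1)+(n-1)$; comparing with the exact hyperbolic solution $(n-1)\coth r$ yields $\Delta r\le(n-1)\coth r$. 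Equivalently the Jacobian $\mathcal A(r,\theta)$ of $\exp_p$ in polar coordinates satisfies $\partial_r\log\bigl(\mathcal A(r,\theta)/\sinh^{n-1}r\bigr)\le0$, so $\mathcal A(r,\theta)\le\sinh^{n-1}r$, and integrating first in $\theta$ and then in $r$ shows that $R\mapsto\mathrm{Vol}(B_R(p))/\mathrm{Vol}_{\mathbb H^n}(B_R)$ is nonincreasing and $\le1$.

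The asymptotic hypothesis enters as follows: ALH of order $\alpha>2$ gives $\abs{g-g_{\mathbb H^n}}_{g_{\mathbb H^n}}=O(e^{-\alpha r})$ on the end, hence $dV_g=(1+O(e^{-\alpha r}))\,dV_{\mathbb H^n}$, and since $\int_0^R e^{-\alpha s}\sinh^{n-1}s\,ds=o\bigl(\mathrm{Vol}_{\mathbb H^n}(B_R)\bigr)$ as $R\to\infty$, the monotone volume ratio tends to $1$ at infinity; as it also tends to $1$ as $R\to0^+$, it is identically $1$. Then I would invoke the equality case: a constant volume ratio forces every inequality above to be an equality pointwise on $X\setminus\{p\}$, so $\mathrm{Hess}\,r=\coth r\,(g-dr\otimes dr)$, $\Delta r=(n-1)\coth r$, and $\mathrm{Ric}(\partial_r,\partial_r)=-(n-1)$. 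The Hessian identity exhibits $g$ as a warped product $dr^2+\sinh^2r\,g_\infty$ over a fixed metric $g_\infty$ on $S^{n-1}$; smoothness of $g$ at $p$ together with $\sinh r=r+O(r^3)$ forces $g_\infty$ to be the round unit sphere, so $g=dr^2+\sinh^2r\,g_{S^{n-1}}=g_{\mathbb H^n}$.

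The hard part will be the equality/rigidity step --- upgrading ``average volume ratio $=1$'' to the pointwise identities --- which genuinely needs both the monotonicity from the Riccati comparison and the \emph{global} smoothness of $r$ (absence of cut locus); one must verify that the limit forces $\mathcal A(r,\theta)=\sinh^{n-1}r$ for \emph{every} direction $\theta$, not merely in an integrated sense. I would also flag a discrepancy: the argument sketched above uses only $\alpha>0$ and is insensitive to whether $n=4$, whereas Shi--Tian require $\alpha>2$ and $n\ne4$; this suggests that in their setting ``ALH'' allows a general conformal infinity, so that the substantive content is to deduce from $K\le0$ and $\mathrm{Ric}\ge-(n-1)g$ that the conformal boundary must be the round $S^{n-1}$. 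An alternative route, probably closer to their actual proof, is a positive mass theorem for ALH manifolds: $\mathrm{Ric}\ge-(n-1)g$ bounds the scalar curvature below, the spinorial (Witten) positive mass theorem gives nonnegativity of the ALH mass (the spin hypothesis, natural here, being a plausible source of the $n\ne4$ restriction), the ALH decay forces the mass to vanish, and the equality case yields the isometry with $(\mathbb H^n,g_{\mathbb H^n})$.
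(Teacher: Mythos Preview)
The paper does not prove this theorem at all: it is quoted verbatim as a black-box result of Shi and Tian \cite{ShiTian} and then applied. So there is no ``paper's own proof'' to compare against; the only question is whether your sketch would stand on its own.

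Your volume-comparison route has a genuine gap. The ALH hypothesis, as defined in the paper immediately after the theorem, is purely a \emph{curvature} decay condition: $\abs{K(x,\sigma)+1}=O(e^{-\alpha d_g(x,o)})$. You have silently replaced this by a \emph{metric} decay condition $\abs{g-g_{\mathbb H^n}}_{g_{\mathbb H^n}}=O(e^{-\alpha r})$, and your key step ``hence $dV_g=(1+O(e^{-\alpha r}))\,dV_{\mathbb H^n}$, so the Bishop--Gromov ratio tends to $1$'' rests entirely on that replacement. Curvature decay to $-1$ does not by itself pin down the Jacobian $\mathcal A(r,\theta)$ modulo $o(1)$ terms: the Riccati equation determines $\partial_r\log\mathcal A$ only once you know the full second fundamental form along the whole ray, and the interior region (where you have only $K\le 0$ and $\mathrm{Ric}\ge-(n-1)g$) can shift the asymptotic value of $\mathcal A(r,\theta)/\sinh^{n-1}r$ by a nontrivial multiplicative constant. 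In particular there is no reason the monotone volume ratio should limit to $1$; Bishop--Gromov only tells you it is $\le 1$ and nonincreasing. You flag this discrepancy yourself (the argument would need neither $\alpha>2$ nor $n\ne4$), and that is exactly the signal that the step is wrong, not merely suboptimal.

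Your second suggestion is much closer to the truth: the Shi--Tian proof is a rigidity argument of positive-mass type for asymptotically hyperbolic manifolds, and the thresholds $\alpha>2$ and $n\ne4$ arise there for analytic reasons (convergence of the relevant boundary integrals defining the mass-type invariant, and a dimensional coincidence in the Gauss--Bonnet/curvature identities). If you want to complete a proof, pursue that line rather than the volume comparison.
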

A complete noncompact Riemannian manifold is called by Shi and Tian an ALH manifold of order $\alpha$ if $\abs{K(x,\sigma)+1}=O(e^{-\alpha d_g(x,o)})$ for some fixed point $o$.

Suppose now $n> 5$ and $\delta>2$, by Theorem \ref{thm:first}, there
exists some $\varepsilon>0$ such that the normalized Ricci flow from
an $\varepsilon$-hyperbolic metric of order $\delta$ converges to an
Einstein metric $g_\infty$. In the proof, we can see that the
solution remains $\varepsilon_1$-hyperbolic, hence the sectional
curvature of the limit is negative. The condition $Ric(g_\infty)\geq
-(n-1)g_\infty$ is automatically satisfied by the Einstein metric.
Therefore to prove Theorem \ref{thm:second}, it suffices to show
\begin{equation*}
  \abs{K_\infty(x,\sigma)+1}\leq C e^{-\alpha d_\infty(x,x_0)}
\end{equation*}
for some $\alpha>2$. As before, we study
$\abs{R_{ipjq}-(g_{ij}g_{pq}-g_{iq}g_{jp})}$ instead of
$K(x,\sigma)+1$. Since the initial metric is
$\varepsilon$-hyperbolic of order $\delta$, we know there exists
some $C$ such that
\begin{equation}\label{eqn:ddd}
  \abs{R_{ipjq}-(g_{ij}g_{pq}-g_{iq}g_{jp})}(x,0)\leq C e^{-\delta d_0(x,x_0)}.
\end{equation}

The method we used above is not readily applicable here, because the higher order derivative estimate (\ref{eqn:high}) works for $t$ bigger than some fixed positive constant. However, we need a bound for $\pfrac{}{t}{R_{ijk}}^l$ immediately after $t=0$. Therefore, we need the following lemma, which is an application of maximum principle on complete manifold. It shows that if $\abs{K_{g_0}(x,\sigma)+1}$ decays at the order $\delta$ with respect to $g(0)$, then $\abs{K_{g(t)}(x,\sigma)+1}$ decays at the same order $\delta$ with respect to $g(t)$ for $t>0$.

\begin{lem}
  \label{lem:maximum}
  Let $(M,g)$ be a complete Riemannian manifold such that equation (\ref{eqn:ddd}) holds.
Then for any $t>0$ as long as the solution exists, we
have
\begin{equation*}
  \abs{R_{ipjq}-(g_{ij}g_{pq}-g_{iq}g_{jp})}(x,t)\leq C(t) e^{-\delta d_t(x,x_0)}.
\end{equation*}
\end{lem}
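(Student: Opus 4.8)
The plan is to run a maximum principle argument on the scalar quantity $F(x,t) = \abs{R_{ipjq} - (g_{ij}g_{pq} - g_{iq}g_{jp})}(x,t)$, comparing it against a carefully chosen spatially decaying supersolution. First I would derive the differential inequality satisfied by $F$. Writing $S_{ipjq} = R_{ipjq} - (g_{ij}g_{pq}-g_{iq}g_{jp})$, the evolution of $R_{ipjq}$ under the normalized Ricci flow gives $\partial_t S = \triangle S + Q(Rm)$, where $Q$ collects quadratic curvature terms; crucially, on the hyperbolic background these quadratic terms, together with the $(n-1)g_{ij}$ normalization, arrange so that the ``linearized'' reaction at the fixed point is nonpositive up to controllable errors — i.e. one gets $\partial_t F \le \triangle F + c_0 F$ for a constant $c_0$ depending only on $n$ and on the $\varepsilon$ from $\varepsilon$-hyperbolicity (this uses that $|S|$ itself is small, so the quadratic terms are effectively linear in $F$ with small coefficient). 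This is routine tensor bookkeeping of the same flavor as equation (\ref{eqn:normofh}).

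Next I would construct the barrier. The natural candidate is $\phi(x,t) = A e^{Kt} e^{-\delta d_t(x,x_0)}$, but $d_t(\cdot,x_0)$ is only Lipschitz and one must handle its lack of smoothness at the cut locus and at $x_0$; the standard device is to work with $\rho(x) = d_0(x,x_0)$ (smooth away from a measure-zero set, and comparable to $d_t$ since all $g(t)$ are uniformly equivalent to $g_{\mathbb H}$), replace $\rho$ by a smooth regularization or argue in the barrier sense of Calabi, and absorb the distortion between $d_0$ and $d_t$ into constants. One computes $\triangle_{g(t)} e^{-\delta \rho} = (\delta^2 |\nabla \rho|^2 - \delta \triangle \rho) e^{-\delta \rho}$, and by the Laplacian comparison valid for $\varepsilon$-hyperbolic metrics (as already used in the proof of Lemma \ref{lem:twotensor}) one has $\triangle_{g(t)} \rho \ge (n-1)(1-C(\varepsilon)) \coth((1+\varepsilon)\rho) \ge$ a positive constant for $\rho$ bounded away from $0$, so the linear term dominates and $\triangle e^{-\delta\rho} \le C e^{-\delta \rho}$; choosing $K$ large enough (depending on $c_0$, $n$, $\varepsilon$) makes $\phi$ a strict supersolution of $\partial_t u = \triangle u + c_0 u$ for $\rho \ge 1$, say, while near $x_0$ one simply uses that $F$ is bounded and $e^{-\delta\rho}$ is bounded below on the compact set $\{\rho \le 1\}$, enlarging $A$ to dominate there.

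Then I would invoke a maximum principle for scalar parabolic inequalities on a complete noncompact manifold — e.g. the Omori–Yau / Ecker–Huisken-type maximum principle, which applies here because the curvature of $g(t)$ is uniformly bounded (indeed $g(t)$ is $\varepsilon$-hyperbolic on the existence interval, so sectional curvatures are pinched near $-1$) and because $F$ grows at most at a controlled rate at infinity (in fact $F \le n\varepsilon$ is bounded). The conclusion is $F(x,t) \le \phi(x,t) = C(t) e^{-\delta d_t(x,x_0)}$ for all $t$ in the existence interval, which is exactly the claim after renaming constants. The main obstacle I anticipate is precisely the interplay between the time-dependent distance functions $d_t$ and the fixed $d_0$ together with the cut-locus non-smoothness: one must check that the distortion $e^{\delta|d_t - d_0|}$ stays bounded (which follows from $\tfrac{d}{dt} d_t \le C$, obtainable from $|h_{ij}| \le n\varepsilon$ and integration along geodesics, or simply from the uniform equivalence of all the metrics to $g_{\mathbb H}$) and that the barrier argument survives the regularization of $\rho$ — a secondary but genuinely delicate point is making sure the constant $c_0$ in the reaction term is independent of $x$ and $t$, which relies on $|S|$ being uniformly small throughout, i.e. on $\varepsilon$-hyperbolicity holding on all of $[0,T]$.
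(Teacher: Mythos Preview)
Your proposal is correct and follows essentially the same route as the paper: derive a linear reaction-diffusion inequality for $|Q|$ with $Q_{ipjq}=R_{ipjq}-(g_{ij}g_{pq}-g_{iq}g_{jp})$, control the exponential weight via comparison estimates on the distance function, and close with an Ecker--Huisken-type maximum principle on the complete manifold. The only cosmetic difference is packaging---the paper multiplies $|Q|$ by $e^{\delta d_t}$ and obtains a drift-reaction inequality for the product (so $d_t$, not $d_0$, carries the weight and its time-variation is absorbed directly), whereas you build $Ae^{Kt}e^{-\delta\rho}$ as a supersolution and compare; these are equivalent, and the cut-locus worry you flag is in fact vacuous here since the $\varepsilon$-hyperbolic metrics on $\mathbb H^n$ have negative curvature and hence globally smooth distance functions away from the base point.
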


\begin{proof}We follow the argument in \cite{MD} and \cite{EK}. Set
$$Q_{ipjq}=R_{ipjq}-(g_{ij}g_{pq}-g_{iq}g_{jp}).$$
Direct calculation shows that
\begin{equation*}
  \frac {\partial}{\partial t}Q=\Delta Q+Q*Rm+2(n-1)Q,
\end{equation*}
where $Q*Rm$ denotes a sum of contractions of $Q$ and the curvature
tensor $Rm.$ Since the curvature of the initial metric is bounded,
if there is a positive $t_0>0$ such that the solution is defined for $t\in [0,t_0]$, then there exists constant $C$,
$$|Rm|(t)\leq C, \quad t\in [0, t_0].$$
By the comparison theorem, the derivatives of
the distance function are uniformly bounded:
$$|\nabla d_t(x, x_0)|+|\nabla^2 d_t(x, x_0)|\leq C,$$
where we can smooth $d_t$ at $x_0$ such that its derivatives are
uniformly bounded. Since we only consider the asymptotic behavior
when $x$ go to infinity, the non-smoothness of the distance function
at $x_0$ can be ignored. Combining the above estimates, we have
$$\frac {\partial}{\partial t}(e^{\delta d_t(x, x_0)}|Q|)\leq \Delta(e^{\delta d_t(x, x_0)}|Q|)+
\textbf{a}\cdot \nabla(e^{\delta d_t(x, x_0)}|Q|)+be^{\delta d_t(x,
x_0)}|Q| $$ where $\textbf{a}$ is a vector with $|\textbf{a}|\leq C$
and $b$ is a constant.  Note that $Q$ satisfies (\ref{eqn:ddd}), by
the maximum principle on complete manifolds in \cite{EK} we have
$$e^{\delta d_t(x, x_0)}|Q|(x, t)\leq e^{bt}\max_{\mathbb H^n}(e^{\delta d_0(x, x_0)}|Q|(x, 0))\leq C e^{bt},
\quad t\in [0, t_0].$$ Thus, the lemma is proved.
\end{proof}

\begin{cor}\label{cor}
  Let $(M,g)$ be a complete Riemannian manifold such that equation (\ref{eqn:ddd}) holds with $\delta>2$. Let $\tilde{\delta}$ be any constant in $(2,\delta)$. There exists some $\tau>0$ such that for each $t\in[0,\tau]$,
    \begin{equation*}
        \abs{R_{ipjq}-(g_{ij}g_{pq}-g_{iq}g_{jp})}(x,t)\leq Ce^{-\tilde{\delta} d_0(x,x_0)}.
    \end{equation*}
\end{cor}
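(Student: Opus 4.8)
The plan is to deduce the corollary directly from Lemma \ref{lem:maximum} by converting the metric-dependent exponential weight $e^{-\delta d_t(x,x_0)}$ into the fixed weight $e^{-\tilde\delta d_0(x,x_0)}$ at the cost of shrinking the exponent slightly and restricting to a short time interval. First I would invoke Lemma \ref{lem:maximum}: since $(M,g)$ satisfies (\ref{eqn:ddd}), for every $t>0$ for which the flow exists we have $\abs{R_{ipjq}-(g_{ij}g_{pq}-g_{iq}g_{jp})}(x,t)\leq C(t)e^{-\delta d_t(x,x_0)}$. Next I would control $d_t$ by $d_0$. Because the initial metric has bounded curvature, Shi's existence theorem gives a uniform time $t_0>0$ and a bound $\abs{Rm}(t)\leq C$ on $[0,t_0]$; integrating the evolution equation $\partial_t g_{ij}=-2h_{ij}$ and using $\abs{h_{ij}}\leq C$ on this interval shows that on $[0,t_0]$ the metrics $g(t)$ and $g(0)$ are uniformly equivalent, with the equivalence constant tending to $1$ as $t\to 0$. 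Hence there is a constant $K(t)$ with $\lim_{t\to 0}K(t)=1$ such that $d_t(x,x_0)\geq K(t)\,d_0(x,x_0)-C$ for all $x$ (the additive constant absorbs the smoothing near $x_0$ and the diameter ambiguity of the basepoint, which as in the proof of Lemma \ref{lem:maximum} is irrelevant for the asymptotics).

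Combining these two ingredients gives, on $[0,t_0]$,
\begin{equation*}
\abs{R_{ipjq}-(g_{ij}g_{pq}-g_{iq}g_{jp})}(x,t)\leq C(t)e^{-\delta K(t) d_0(x,x_0)+C\delta}.
\end{equation*}
Now, since $\tilde\delta<\delta$ and $K(t)\to 1$ as $t\to 0$, there is some $\tau\in(0,t_0]$ such that $\delta K(t)\geq\tilde\delta$ for all $t\in[0,\tau]$; absorbing the harmless factor $e^{C\delta}$ and the bound $\sup_{[0,\tau]}C(t)$ (which is finite, being a bound coming from $e^{bt}$ on a compact interval as in Lemma \ref{lem:maximum}) into a single constant $C$, we obtain $\abs{R_{ipjq}-(g_{ij}g_{pq}-g_{iq}g_{jp})}(x,t)\leq Ce^{-\tilde\delta d_0(x,x_0)}$ for $t\in[0,\tau]$, as claimed. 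Note that at $t=0$ the statement is just (\ref{eqn:ddd}) itself (with $\tilde\delta<\delta$), so the content is really the short-time persistence.

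The only genuinely delicate point is the comparison $d_t\geq K(t)d_0-C$ with $K(t)\to 1$: one must be slightly careful that the metric equivalence $g(t)\geq(1-o(1))g(0)$ on $[0,\tau]$ is uniform in space, which follows because $\abs{h_{ij}}$ is bounded uniformly on $\mathbb H^n\times[0,t_0]$ (the curvature bound on this interval gives $\abs{h_{ij}}=\abs{R_{ij}+(n-1)g_{ij}}\leq C$ everywhere), so $\abs{\partial_t g_{ij}}\leq C$ pointwise and integration in $t$ yields a spatially uniform bound $\abs{g_{ij}(t)-g_{ij}(0)}\leq Ct$. The rest is bookkeeping with constants, and no new analytic idea is needed beyond what already appears in the proof of Lemma \ref{lem:maximum}.
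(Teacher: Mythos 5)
Your argument is correct and fills in exactly the step the paper leaves implicit: Lemma~\ref{lem:maximum} gives decay of $|Q|$ in terms of $d_t$ with exponent $\delta$, and the short-time metric equivalence $g(t)\geq K(t)^2 g(0)$ (hence $d_t\geq K(t)d_0$) with $K(t)\to 1$ converts this to decay in $d_0$ at any smaller exponent $\tilde\delta$, with the factor $C(t)=Ce^{bt}$ bounded on $[0,\tau]$. This is the natural route the paper intends, so no genuine difference in approach.
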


Given this corollary, we can argue as before. The key point is Lemma \ref{lem:parabolic} and Lemma \ref{lem:convolution}. In equation
(\ref{eqn:prefinal}),

\begin{equation*}
  \abs{h_{ij}}^2(y,0)\leq C e^{-2\delta d_0(y,x_0)}
\end{equation*}
and
\begin{equation*}
  \exp \left( -(2\sqrt{\lambda}-C_{13}(\varepsilon))d_0(y,x) \right)\leq C e^{-(\sqrt{(n-1)^2-\max\{C_1(\varepsilon_1),C_2(\varepsilon_1)\}}-C_{13}(\varepsilon_1))d_0(y,x)}.
\end{equation*}
If $n>5$ and $\delta>2$, there exists $\alpha$ such that
\begin{equation*}
    2<\alpha<\tilde{\delta}
\end{equation*}
and
\begin{equation*}
  2\alpha<\sqrt{(n-1)^2-\max{C_1(\varepsilon_1),C_2(\varepsilon_1)}}-C_{13}(\varepsilon_1).
\end{equation*}
Lemma \ref{lem:convolution}  and equation (\ref{eqn:prefinal}) implies that for $t>\tau$,
\begin{equation*}
  \max_{B_0(x,\sqrt{\eta})\times [t-\eta,t]}\abs{h_{ij}}^2\leq C \exp(-C_{12}(\varepsilon_1)t)\exp(-2\alpha d_0(x,x_0)).
\end{equation*}
The same argument as before gives
\begin{equation*}
  \int_\tau^\infty \pfrac{}{t}\abs{R_{ipjq}-(g_{ij}g_{pq}-g_{iq}g_{jp})}(x,s)ds\leq C
  \exp(-\alpha d_0(x,x_0)).
\end{equation*}
Together with Corollary \ref{cor}, we have
\begin{equation*}
  \abs{K_\infty(x,\sigma)+1}\leq C e^{-\alpha d_\infty(x,x_0)}.
\end{equation*}
Here we used the fact that $g(0)$ and $g_\infty$ are dominated by
each other. The rigidity theorem of Shi and Tian implies that
$g_\infty$ is $g_{\mathbb H}$, which concludes the proof of Theorem
\ref{thm:second}.

\section{Appendix: Higher order estimates}
The purpose of this appendix is to get higher order derivative estimate of $h_{ij}$ in the following equation.
\begin{equation}\label{hagain}
\pfrac{}{t}h_{ij}=\triangle h_{ij}-2R_{ipjq}h_{pq}-2h_{ip}h_{pj}
\end{equation}
We will prove a local estimate in the parabolic neighborhood $B_0(x,\sqrt{\tau/2})\times [t-\tau/2,t]$. The difficulty is that if we write equation (\ref{hagain}) in terms of coordinates, then the coefficients involves $\pfrac{g_{ij}}{x_k}$. For our purpose, we need to estimate second derivatives of $h_{ij}$ in terms of the $L^\infty$ norm. It is a routine technique in PDE to apply $L^p$ estimate and Sobolev embedding to get $C^\alpha$ H\"{o}lder norm of $h_{ij}$, then apply H\"{o}lder estimate. In order that the above estimates work, we need to control the $C^{1,\alpha}$ norm of $g_{ij}$. However, all we know is that the curvature is bounded and moreover, thanks to the derivative estimate of Shi, if we need, we may assume that the derivatives of the curvature tensor are bounded too.

It is well known that in terms of harmonic coordinates, $g_{ij}$ is $C^{1,\alpha}$ for any $\alpha<1$ if the curvature is bounded. In our problem, the metric is changing with time. This makes the discussion tricky.

For simplicity, let's write $\tilde{g}$ for $g(t-\tau/2)$. As for the existence for harmonic coordinates, we have this result of Anderson\cite{Anderson} ,
\begin{thm}
    \label{thm:anderson}
    Given $n\geq 2$ and $\alpha\in (0,1)$, $\Lambda, i_0>0$, one can for each $Q>0$ find $r(n,\alpha,\Lambda,i_0)>0$ such that for any complete Riemannian $n-$manifold $(M,g)$ with
    \begin{eqnarray*}
        \abs{\mbox{Ric}}&\leq& \Lambda\\
        \mbox{inj}&\geq& i_0,
    \end{eqnarray*}
    for any $x\in M$, there exists harmonic coordinates $x_i$ on $B(x,r)$ such that
    \begin{equation*}
        \norm{g_{ij}}_{C^{1,\alpha}}\leq Q.
    \end{equation*}
\end{thm}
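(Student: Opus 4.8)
The plan is to follow the standard strategy of DeTurck--Kazdan and Jost--Karcher, together with its quantitative refinement due to Anderson: a metric written in \emph{harmonic} coordinates satisfies an elliptic system whose only badly behaved term is governed by the Ricci tensor, so one can bootstrap regularity; the uniformity in the base point and in the prescribed constant $Q$ is then extracted by a rescaling/compactness argument. Concretely I would argue by contradiction, introducing the harmonic radius $r_h(x)$, the supremum of $r>0$ for which there exist harmonic coordinates $(u^i)$ on the metric ball $B(x,r)$ in which $\tfrac12\delta_{ij}\le g_{ij}\le 2\delta_{ij}$ and $\norm{g_{ij}}_{C^{1,\alpha}}\le Q$.

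First I would check $r_h(x)>0$ for every $x$. On a small geodesic ball, comparison geometry gives $C^0$-control of $g$ in geodesic normal coordinates $(y^i)$ and uniform ellipticity of $\Delta_g$ with continuous coefficients; solving the Dirichlet problems $\Delta_g u^i=0$ on $B(x,\rho)$ with $u^i=y^i$ on $\partial B(x,\rho)$ then produces, via the $L^p$ theory for equations with continuous coefficients and Sobolev embedding, solutions $u^i\in W^{2,p}$ for all $p$, hence a $C^{1,\beta}$ chart. In these harmonic coordinates the DeTurck--Kazdan identity
\begin{equation*}
  g^{kl}\partial_k\partial_l g_{ij}=-2R_{ij}+Q_{ij}(g,g^{-1},\partial g),
\end{equation*}
with $Q_{ij}$ quadratic in $\partial g$ and rational in $(g,g^{-1})$, lets one bootstrap: $g_{ij}\in W^{1,q}\Rightarrow Q_{ij}\in L^{q/2}\Rightarrow g_{ij}\in W^{2,q/2}\Rightarrow g_{ij}\in W^{1,q'}$ with $q'>q$, reaching $W^{2,p}$ with $p>n$ and hence a $C^{1,\alpha}$ bound on a slightly smaller ball; this gives $r_h(x)>0$.

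Now suppose the conclusion fails: there are manifolds $(M_k,g_k)$ with $\abs{\mathrm{Ric}}\le\Lambda$, $\mathrm{inj}\ge i_0$ and points $x_k$ with $r_h(x_k)\to 0$. Rescale $\hat g_k=r_h(x_k)^{-2}g_k$, so $\abs{\mathrm{Ric}_{\hat g_k}}\to 0$, $\mathrm{inj}_{\hat g_k}\to\infty$, and the harmonic radius at $x_k$ is now $1$. By the elliptic estimates above the $\hat g_k$, in harmonic coordinates on unit balls, enjoy uniform $C^{1,\alpha}$ bounds, so a subsequence converges in $C^{1,\alpha'}$, $\alpha'<\alpha$, to a metric $g_\infty$ on a unit ball that is $W^{2,p}$ for all $p$ and Ricci-flat; bootstrapping in the limiting equation makes $g_\infty$ smooth and flat, with infinite injectivity radius. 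The flat metric has infinite harmonic radius, while the harmonic radius is continuous under $C^{1,\alpha'}$ convergence, forcing the limit to have $r_h\le 1$ --- a contradiction.

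The step I expect to be the genuine obstacle is the continuity (really lower semicontinuity) of $r_h$ under $C^{1,\alpha'}$ convergence: one must show that charts which are harmonic and $C^{1,\alpha}$-bounded on $B(x_k,1)$ pass to a corresponding chart on the limit, which hinges on $L^p$-solvability and a priori estimates for the Laplace equation depending only on the ellipticity constant and the modulus of continuity of the coefficients --- exactly the quantities that must be kept uniform along the sequence. The comparison estimates that start the argument and the bootstrap of the DeTurck--Kazdan system are routine; making every constant independent of the base point and of $k$ is where the care lies.
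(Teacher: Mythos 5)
This theorem is not proved in the paper: it is quoted verbatim from Anderson \cite{Anderson} and used as a black box in the Appendix to set up harmonic coordinates. So the comparison is with Anderson's own argument, and your sketch is indeed the standard blow-up/compactness proof of that theorem. The overall architecture is right: introduce the harmonic radius $r_h$, show it is positive by a Dirichlet problem plus the DeTurck--Kazdan identity, argue by contradiction, rescale so that $r_h=1$, pass to a pointed $C^{1,\alpha'}$ limit which is complete and Ricci-flat with infinite injectivity radius, and derive a contradiction from the limit having harmonic radius strictly larger than $1$.

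Two places are thinner than the rest and deserve a flag. First, as written your definition of $r_h$ uses the unweighted $C^{1,\alpha}$ norm $\norm{g_{ij}}_{C^{1,\alpha}}\le Q$. That quantity is not invariant under the rescaling $\hat g_k=r_h(x_k)^{-2}g_k$, $\hat u=r_h(x_k)^{-1}u$, so it does not follow from the definition that the rescaled harmonic radius at $x_k$ equals $1$; one has to use the scale-invariant version $r\,\sup\abs{\partial g}+r^{1+\alpha}[\partial g]_{C^\alpha}\le Q$ together with the normalization $\tfrac12\delta\le g\le 2\delta$, which is how Anderson sets it up. This is a small fix but without it the blow-up normalization step is literally wrong. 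Second, the assertion that the limit is \emph{flat} is not automatic in dimension $\ge 4$: Ricci-flat plus infinite injectivity radius does not by itself force constant curvature. In Anderson's argument the flatness (or, more precisely, that the limit is standard Euclidean $\mathbb R^n$) comes from an additional volume argument --- the harmonic radius bound $r_h\ge 1$ propagates to all points, which yields Euclidean volume growth, and then the equality case of Bishop--Gromov under $\mathrm{Ric}\ge 0$ forces the limit to be flat $\mathbb R^n$. Alternatively one can argue directly that the limit has $r_h=\infty$ from interior elliptic regularity for the Ricci-flat system, without invoking flatness. Either way, as stated your conclusion is a non-sequitur for $n\ge 4$ and needs this extra ingredient. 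You correctly identified the semicontinuity of $r_h$ under $C^{1,\alpha'}$ convergence (with $\alpha'<\alpha$) as the delicate point; it is, and the resolution is exactly the uniform-ellipticity/modulus-of-continuity bookkeeping you mention.

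Since the paper simply cites Anderson and supplies no proof, the comparison ends there: your route is the intended one, with the two gaps above.
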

We then apply this theorem to $\tilde{g}$. In fact, we get better result
\begin{equation}
    \norm{\tilde{g}_{ij}}_{C^{k,\alpha}}\leq Q(K),
    \label{eqn:better}
\end{equation}
since we have derivatives of Ricci tensor bounded. By choosing a smaller $\tau$ if necessary, we can assume that the harmonic coordinates exist on $B_0(x,\sqrt{\tau/2})$.

To extend our estimate to $s\in [t-\tau/2,t] $, we need the following lemma
\begin{lem}\label{lem:gamma}
    If $\Gamma^i_{jk}$ denotes the Christoffel symbol of $g(s)$ for $s\in [t-\tau/2,t]$, then
    \begin{equation*}
        \abs{\Gamma^i_{ij}}\leq C.
    \end{equation*}
\end{lem}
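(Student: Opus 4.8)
The plan is to compare the Christoffel symbols of $g(s)$ with those of the fixed reference metric $\tilde g = g(t-\tau/2)$, whose $C^{k,\alpha}$ norm in the chosen harmonic coordinates is already controlled by \eqref{eqn:better}. Writing $\Gamma^i_{jk}(s)$ and $\tilde\Gamma^i_{jk}$ for the two sets of symbols, their difference is a genuine tensor,
\begin{equation*}
  A^i_{jk}(s) := \Gamma^i_{jk}(s)-\tilde\Gamma^i_{jk}
  = \tfrac12 g^{il}(s)\bigl(\tilde\nabla_j g_{kl}(s)+\tilde\nabla_k g_{jl}(s)-\tilde\nabla_l g_{jk}(s)\bigr),
\end{equation*}
where $\tilde\nabla$ is the Levi-Civita connection of $\tilde g$. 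Since $\tilde\Gamma^i_{jk}$ is bounded in $C^{k-1,\alpha}$ on $B_0(x,\sqrt{\tau/2})$ by \eqref{eqn:better}, it suffices to bound $A^i_{jk}(s)$, i.e. to bound $g^{il}(s)$ and $\tilde\nabla_m g_{jk}(s)$ uniformly for $s\in[t-\tau/2,t]$.

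For the zeroth-order factor $g^{il}(s)$: because $g(s)$ is $\varepsilon_1$-hyperbolic and so is $\tilde g$, the two metrics are uniformly equivalent, and in the harmonic coordinates of $\tilde g$ we have $|\tilde g_{ij}|,|\tilde g^{ij}|\le C$ by \eqref{eqn:better}; hence $|g_{ij}(s)|,|g^{ij}(s)|\le C$ for all $s$ in the interval. For the first-order factor I would control $\tilde\nabla g(s)$ by integrating the Ricci flow in time: since $\partial_s g_{ij}=-2h_{ij}$ and we already know $|h_{ij}|\le n\varepsilon_1$ together with $|\nabla\nabla h_{ij}|\le C$ (inequality \eqref{eqn:high}, valid on exactly this parabolic neighborhood once $t>\tau$), differentiating covariantly (with respect to $\tilde g$) and integrating from $t-\tau/2$ gives
\begin{equation*}
  \tilde\nabla_m g_{ij}(s) = \tilde\nabla_m g_{ij}(t-\tau/2) + \int_{t-\tau/2}^{s}\!\!\tilde\nabla_m\bigl(-2h_{ij}\bigr)\,d\sigma .
\end{equation*}
The boundary term vanishes (harmonic coordinates of $\tilde g$ make $\tilde\nabla g(t-\tau/2)$ small, in any case bounded by \eqref{eqn:better}), and the integrand is bounded since $\tilde\nabla h = \nabla h + (\Gamma(\sigma)-\tilde\Gamma)*h = \nabla h + A(\sigma)*h$; thus $|\tilde\nabla g(s)|\le C + C\int |A(\sigma)|\,d\sigma$. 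Feeding this back into the formula for $A^i_{jk}(s)$ yields $|A(s)|\le C + C\int_{t-\tau/2}^{s}|A(\sigma)|\,d\sigma$, and Gr\"onwall's inequality on the interval of length $\tau/2$ gives $|A(s)|\le C$. Combined with the $C^0$ bound on $\tilde\Gamma$, this gives $|\Gamma^i_{jk}(s)|\le C$, from which $|\Gamma^i_{ij}|\le C$ follows by tracing; one may also note $\Gamma^i_{ij}=\partial_j\log\sqrt{\det g(s)}$ and use the volume-form evolution $\partial_s\log\sqrt{\det g}=-\mathrm{tr}_{g}h$ with $|\mathrm{tr}_g h|\le C$, which gives the contracted statement even more directly once the $\tilde g$-harmonic-coordinate bound $|\partial_j\log\sqrt{\det\tilde g}|\le C$ is in hand.

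The main obstacle is the circularity between "bounding $\tilde\nabla g(s)$ requires bounding $A(s)$" and "bounding $A(s)$ requires bounding $\tilde\nabla g(s)$"; the Gr\"onwall closure above is what breaks it, and it works precisely because the time interval $[t-\tau/2,t]$ has fixed finite length and $|h|,|\nabla\nabla h|$ are already under control on this neighborhood. A secondary technical point is making sure the harmonic coordinates of $\tilde g$ actually cover $B_0(x,\sqrt{\tau/2})$ and that passing between $\tilde g$-balls and $g(s)$-balls loses only uniform constants — this is exactly the shrinking of $\tau$ already invoked in the text, so no new difficulty arises there.
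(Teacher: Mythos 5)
Your overall strategy --- compare $\Gamma(s)$ to the reference $\tilde\Gamma$ and propagate a bound forward in time over the finite interval $[t-\tau/2,t]$ --- is the same one the paper uses, but you take a longer detour than necessary. The paper works directly with the evolution equation
\begin{equation*}
\pfrac{}{s}\Gamma^i_{jk}=-g^{il}(\nabla_j h_{kl}+\nabla_k h_{jl}-\nabla_l h_{jk}),
\end{equation*}
where $\nabla$ is the Levi--Civita connection of the \emph{evolving} metric $g(s)$. Because $\nabla h=\nabla R_{ij}$ is controlled for $s\ge\tau/2$ by Shi's derivative estimates (already invoked in Section~\ref{section:proof}), the right-hand side is bounded outright; integrating from $s=t-\tau/2$ (where $\Gamma=\tilde\Gamma$ is bounded by the harmonic-coordinate estimate \eqref{eqn:better}) over an interval of length $\le\tau/2$ gives $|\Gamma^i_{jk}(s)|\le C$ immediately, with no Gr\"onwall closure needed. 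The circularity you identified between $\tilde\nabla g(s)$ and $A(s)$ is created precisely by expressing everything in terms of the frozen connection $\tilde\nabla$; switching to $\nabla$ of $g(s)$ dissolves it, because there the curvature-derivative bound is available directly. Your tensorial formula for $A^i_{jk}$ and the Gr\"onwall argument are both correct, so the proof can be made to go through, but you should be aware it is doing more work than the problem requires.

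There is, however, one genuine logical flaw you should fix: you invoke inequality \eqref{eqn:high}, $\abs{\nabla\nabla h_{ij}}\le C\max\abs{h_{pq}}$, to bound the integrand. But \eqref{eqn:high} is the \emph{conclusion} the appendix is working toward, and Lemma~\ref{lem:gamma} is a step in establishing it; citing it here is circular. What you actually need is only $\abs{\nabla h}\le C$ (not $\abs{\nabla\nabla h}$), and this follows from Shi's estimate that all covariant derivatives of the curvature of $g(s)$ are bounded for $s\ge\tau/2$ --- the same input the paper uses. Replace the reference to \eqref{eqn:high} by that, and your argument becomes sound.
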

\begin{proof}
    Write $\tilde{\Gamma}^i_{jk}$ for the Christoffel symbol of $\tilde{g}$. We have
    \begin{equation*}
        \abs{\tilde{\Gamma}^i_{ij}}\leq C.
    \end{equation*}
    Recall that $\pfrac{g_{ij}}{t}=-2h_{ij}$, hence
    \begin{equation}\label{eqn:x2}
        \pfrac{}{t}\Gamma^i_{jk}=-g^{il}(\nabla_j h_{kl}+\nabla_k h_{jl}-\nabla_l h_{jk}).
    \end{equation}
    The lemma follows from the assumption that the right hand side is bounded.
\end{proof}

The following is a basic formula in Riemannian geometry,
\begin{equation}\label{eqn:x1}
    \pfrac{g_{ij}}{x_k}=g_{lj}\Gamma^l_{ki}+g_{il}\Gamma^l_{kj}.
\end{equation}
Due to this formula, we know $g(s)_{ij}$ have bounded first order derivatives in our coordinates. By induction and repeated differentiating equation (\ref{eqn:x1}), the $k-$th derivatives of $g_{ij}$ are bounded if $(k-1)-$th derivatives of $\Gamma^i_{jk}$ are.

To obtain higher order derivatives estimate of $\Gamma^i_{jk}$, we can argue like Lemma \ref{lem:gamma}. We know the derivatives of $\tilde{\Gamma}^i_{jk}$ are bounded. It suffices to control the time derivatives. Take partial derivative of equation (\ref{eqn:x2}). For simplicity, we omit similar terms since we are interested in an upper bound only.
\begin{eqnarray*}
    \pfrac{}{t}(\pfrac{\Gamma^i_{jk}}{x_p})&=& -\pfrac{g^{il}}{x_p}\nabla_j h_{kl}+\pfrac{}{x_p}(\nabla_jh_{kl}) +\cdots \\
    &=& -\frac{\partial g^{il}}{\partial x_p}\nabla_j h_{kl}+\nabla_p\nabla_j h_kl+ \Gamma*\nabla h +\cdots \\
\end{eqnarray*}

The right hand side is bounded. Therefore, we proved the first derivatives of $\Gamma^i_{jk}$, hence the second derivatives of $g(s)_{ij}$ are bounded. Take one more partial derivative of equation (\ref{eqn:x2}) and write $\nabla\nabla\nabla h$ for $\pfrac{}{x_*}\nabla\nabla h$. Since all covariant derivatives of $h$ are bounded, first derivatives of $\Gamma$ and second derivatives of $g_{ij}$ are bounded, it follows that the second derivatives of $\Gamma^i_{ij}$ are bounded. We can repeat the above argument to get bounds for $k-$th derivatives of $g(s)_{ij}$.

Due to the evolution equation, $\pfrac{g_{ij}}{t}=-2h_{ij}$. Since we have bounds on any finite order derivatives of $\Gamma^i_{ij}$, all partial derivatives of $h_{ij}$, hence those of $\pfrac{g_{ij}}{t}$ are bounded. These are strong enough so that we can apply the routine estimate of parabolic equation.

\end{document}